\numberwithin{equation}{section}
\newcommand{\N}{\mathbb{N}}
\newcommand{\R}{\mathbb{R}}
\newcommand{\sfd}{{\sf d}}
\renewcommand{\d}{{\mathrm d}}
\newcommand{\restr}[1]{\lower3pt\hbox{\(|_{#1}\)}}
\newcommand{\nchi}{{\raise.3ex\hbox{\(\chi\)}}}
\newcommand{\Der}{{\rm Der}}
\newcommand{\1}{\mathbbm 1}
\newcommand{\fr}{\penalty-20\null\hfill\(\blacksquare\)}
\newcommand{\mm}{\mathfrak{m}}
\newcommand{\X}{{\rm X}}
\newcommand{\Y}{{\rm Y}}
\newcommand{\LIP}{{\rm LIP}}
\newcommand{\Lip}{{\rm Lip}}
\newcommand{\lip}{{\rm lip}}
\renewcommand{\div}{{\rm div}}
\newtheorem{theorem}{Theorem}[section]
\newtheorem{corollary}[theorem]{Corollary}
\newtheorem{lemma}[theorem]{Lemma}
\newtheorem{proposition}[theorem]{Proposition}
\newtheorem{definition}[theorem]{Definition}
\newtheorem{example}[theorem]{Example}
\newtheorem{remark}[theorem]{Remark}
\title{Preduals of metric BV spaces}
\author{Enrico Pasqualetto}
\address{Department of Mathematics and Statistics,
P.O.\ Box 35 (MaD), FI-40014 University of Jyvaskyla}
\email{enrico.e.pasqualetto@jyu.fi}
\begin{document}
\date{\today} 
\keywords{Function of bounded variation; metric measure space; extended metric-topological measure space; predual;
derivation; PI space}
\subjclass[2020]{26A45, 53C23, 49J52, 46E35, 46B10}
\begin{abstract}
We study the predual of the space of functions of bounded variation defined over a metric measure space
$({\rm X},{\sf d},\mathfrak m)$ with $\mathfrak m$ finite. More specifically, for any exponent $p\in(1,\infty)$
we construct an isometric predual of the space ${\rm BV}_p({\rm X})$ of $p$-integrable functions of bounded variation,
which we equip with the norm $\|f\|_{{\rm BV}_p({\rm X})}:=\|f\|_{L^p({\rm X})}+|Df|({\rm X})$. Moreover, we prove that
the standard BV space ${\rm BV}({\rm X}):={\rm BV}_1({\rm X})$, which fails to have a predual for some choices of the
metric measure space, does have a predual in the case where $({\rm X},{\sf d},\mathfrak m)$ is a PI space (i.e.\ a
doubling metric measure space supporting a weak $(1,1)$-Poincar\'{e} inequality) of finite diameter. Along the way,
we also develop a basic theory of BV functions in the setting of extended metric-topological measure spaces, which
is of independent interest.
\end{abstract}
\maketitle
\tableofcontents
\section{Introduction}
In the framework of metric measure spaces, functions of bounded variation (BV functions, for short) and sets of finite perimeter
have been investigated thoroughly, starting from \cite{Mir:03}; see e.g.\ the works
\cite{Amb:01,Amb:02,Amb:DiMa:14,DiMaPhD:14,Martio16,Kin:Kor:Sha:Tuo:14,Lah:20,ambrosio2018rigidity,bru2019rectifiability,Ant:Bre:Pas:24,PasSod25}.
The aim of this paper is to
study a functional-analytic property of metric BV spaces: the existence of an (isometric) predual, i.e.\ of a Banach space
whose continuous dual is (isometrically) isomorphic to the Banach space of BV functions. The presence of a predual of the metric
BV space provides a notion of weak\(^*\) convergence of BV functions, along with a corresponding compactness result (due to the Banach--Alaoglu theorem).
\medskip

In this paper, by a metric measure space \({\bf X}=(\X,\sfd,\mm)\) we mean a metric space \((\X,\sfd)\) equipped with a finite Radon measure
\(\mm\), see Definition \ref{def:mms}; the finiteness assumption is essential for our proof strategy to work. The notion of metric BV space
\({\rm BV}({\bf X})\) that fits nicely our purpose is the one introduced in \cite{DiMar:14,DiMaPhD:14} via an integration-by-parts formula,
in duality with a suitable class of derivations with divergence; it follows from \cite{DiMar:14,DiMaPhD:14}
that this notion of BV is fully equivalent to other ones that were previously studied, such as the approach via Lipschitz
approximation \cite{Mir:03} and the `curvewise' one in terms of test plans \cite{Amb:DiMa:14}.
Roughly speaking, a function \(f\in{\rm BV}({\bf X})\) is canonically associated
with a bounded linear operator \(b\mapsto{\bf L}_f(b)\) that maps each derivation with divergence \(b\in{\rm Der}^\infty_\infty({\bf X})\)
(cf.\ Definitions \ref{def:der} and \ref{def:div}) to a signed Radon measure \({\bf L}_f(b)\) on \(\X\). The latter, which is required
to satisfy the integration-by-parts formula \({\bf L}_f(b)(\X)=-\int f\,\div(b)\,\d\mm\), can be thought of as the `distributional derivative
of \(f\) in the direction of \(b\)'; see Definition \ref{def:BV_der}.
The total variation \({\rm V}(f)\) of \(f\in{\rm BV}({\bf X})\) is defined as the operator norm of \({\bf L}_f\). Letting
\[
\|f\|_{{\rm BV}({\bf X})}\coloneqq\|f\|_{L^1(\mm)}+{\rm V}(f)\quad\text{ for every }f\in{\rm BV}({\bf X}),
\]
we obtain a norm \(\|\cdot\|_{{\rm BV}_p({\bf X})}\) on \({\rm BV}({\bf X})\), which makes it a Banach space. We will show that
\begin{equation}\label{eq:intro_1}
{\rm BV}({\bf X})\text{ can fail to have a predual.}
\end{equation}
Indeed, in Example \ref{ex:no_predual} we construct a (compact) metric measure space whose BV space coincides with \(L^1(\mm)\)
(i.e.\ each integrable function is BV, with null total variation), which -- as it is well known -- fails to have a predual.
On the contrary, our first main result states that -- under mild assumptions on \((\X,\sfd)\) -- for any exponent \(p\in(1,\infty)\) the Banach space
\begin{equation}\label{eq:intro_2}
{\rm BV}_p({\bf X})\text{ has an isometric predual,}
\end{equation}
where (cf.\ Definition \ref{def:BV_der_p}) we define \({\rm BV}_p({\bf X})\coloneqq L^p(\mm)\cap{\rm BV}({\bf X})\)
and we equip it with the norm
\[
\|f\|_{{\rm BV}_p({\bf X})}\coloneqq\|f\|_{L^p(\mm)}+{\rm V}(f);
\]
see Corollary \ref{cor:predual_BV_mms}. In several metric measure spaces of interest, the validity of an embedding theorem
guarantees a higher integrability of functions in \({\rm BV}({\bf X})\), meaning that every \(f\in{\rm BV}({\bf X})\)
belongs to \(L^p(\mm)\) for some exponent \(p\in(1,\infty)\) depending only on the space \({\bf X}\). In other words,
we have that \({\rm BV}({\bf X})={\rm BV}_p({\bf X})\) with equivalent norms, thus accordingly \eqref{eq:intro_2} implies
that \({\rm BV}({\bf X})\) has a predual. For instance, assuming \({\bf X}\) is a PI space (i.e.\ a doubling metric measure
space supporting a weak \((1,1)\)-Poincar\'{e} inequality \cite{HKST:15}) whose diameter is finite,
\({\rm BV}({\bf X})\) and \({\rm BV}_{\alpha_*}({\bf X})\) are isomorphic as Banach spaces, where
\(\alpha_*\coloneqq\frac{\alpha}{\alpha-1}\), for any given \(\alpha>1\) that is greater than a quantity depending only
on the doubling constant of \({\bf X}\). In summary, we obtain that
\begin{equation}\label{eq:intro_3}
{\bf X}=(\X,\sfd,\mm)\text{ is a bounded PI space}\quad\Longrightarrow\quad{\rm BV}({\bf X})\text{ has a predual.}
\end{equation}
See Section \ref{s:predual_BV_PI} for the details. Note that \eqref{eq:intro_1} and \eqref{eq:intro_3} are not in
contradiction, because the metric measure space that is described in Example \ref{ex:no_predual} is not a PI space.
\medskip

Let us now spend a few words on our construction of an isometric predual of \({\rm BV}_p({\bf X})\),
which has been inspired in broad terms by the arguments in \cite[Remark 3.12]{AmbFusPal00}, where it
is shown that the BV space on the Euclidean space has a predual. First, let us make
the additional assumption that \((\X,\sfd)\) is locally compact. The operator \(f\mapsto(f,{\bf L}_f)\) is a linear isometry from
\({\rm BV}_p({\bf X})\) to the product space \(L^p(\mm)\times\mathcal L({\rm Der}^\infty_\infty({\bf X});\mathcal M(\X))\) equipped
with the \(1\)-norm of its component spaces, where \(\mathcal L({\rm Der}^\infty_\infty({\bf X});\mathcal M(\X))\) denotes the
space of bounded linear operators from \({\rm Der}^\infty_\infty({\bf X})\) to the space \(\mathcal M(\X)\) of signed Radon
measures on \(\X\). As \(\X\) is locally compact, the Riesz--Markov--Kakutani theorem guarantees that
\(L^p(\mm)\times\mathcal L({\rm Der}^\infty_\infty({\bf X});\mathcal M(\X))\) is isometrically isomorphic to the dual of
\[
\mathbb W_q({\bf X})\coloneqq L^q(\mm)\times({\rm Der}^\infty_\infty({\bf X})\hat\otimes_\pi C_0(\X))
\]
equipped with the \(\infty\)-norm of its component spaces, where \(q\in(1,\infty)\) is the conjugate exponent to \(p\), while
\(\hat\otimes_\pi\) denotes the projective tensor product. All in all, we have a linear isometry
\[
\phi_{p,{\bf X}}\colon{\rm BV}_p({\bf X})\to\mathbb W_q({\bf X})^*.
\]
The last step is to show that the image of \(\phi_{p,{\bf X}}\) is the annihilator of a vector subspace
\(\mathbb V_q({\bf X})\) of \(\mathbb W_q({\bf X})\), whose definition involves the divergence \(b\mapsto\div(b)\). As a consequence, we have that
\[
{\rm BV}_p({\bf X})\text{ is isometrically isomorphic to }(\mathbb W_q({\bf X})/\mathbb V_q({\bf X}))^*,
\]
in particular the Banach space \({\rm BV}_p({\bf X})\) has an isometric predual. See Theorem \ref{thm:predual_BV_p} for the details.
Local compactness is necessary for the above proof, as it relies on the Riesz--Markov--Kakutani theorem.
In order to remove the local compactness assumption, we employ the following strategy:
\begin{itemize}
\item We extend the definition of \({\rm BV}({\bf Y})\) (and \({\rm BV}_p({\bf Y})\) for \(1<p<\infty\)) to the more general class of extended metric-topological
measure spaces \({\bf Y}=(\Y,\sigma,\rho,\mathfrak n)\) introduced in \cite{AmbrosioErbarSavare16,Sav:22},
by using the theory of Lipschitz derivations that has been recently developed in \cite{PasTai25}.
\item If \((\Y,\sigma)\) is locally compact, then the very same arguments we sketched above show that \({\rm BV}_p({\bf Y})\) has an isometric predual
(in fact, Theorem \ref{thm:predual_BV_p} is formulated for extended spaces).
\item Now, fix a metric measure space \({\bf X}\). We denote by \(\hat{\bf X}=(\hat\X,\hat\tau,\hat\sfd,\hat\mm)\) its Gelfand compactification
\cite[Section 2.1.7]{Sav:22}, which is an extended metric-topological measure space -- whose topology is compact -- wherein \({\bf X}\) can be embedded canonically.
Our goal is to show that
\begin{equation}\label{eq:intro_4}
{\rm BV}_p({\bf X})\text{ and }{\rm BV}_p(\hat{\bf X})\text{ can be identified,}
\end{equation}
getting that \({\rm BV}_p({\bf X})\) has an isometric predual (even when \((\X,\sfd)\) is not locally compact).
\item To verify \eqref{eq:intro_4}, we need to consider also another notion of BV in the extended setting. Namely, we introduce the space \({\rm BV}_*({\bf Y})\)
in Definition \ref{def:BV_rel} (and \({\rm BV}_{*,p}({\bf Y})\) for \(1<p<\infty\) in Definition \ref{def:BV_rel_p}), by mimicking the notion of metric BV space
considered in \cite{Mir:03}. Shortly said, an integrable function \(f\) belongs to \({\rm BV}_*({\bf Y})\) if it can be approximated in energy by a sequence of elements
of the algebra \(\LIP_b(\Y,\sigma,\rho)\) of \(\sigma\)-continuous \(\rho\)-Lipschitz functions. By applying or adapting known techniques and results from
\cite{DiMaPhD:14,PasSod25,PasTai25}, we can prove that
\[
{\rm BV}(\hat{\bf X})\subseteq{\rm BV}_*(\hat{\bf X})\subseteq{\rm BV}_*({\bf X})={\rm BV}({\bf X})\subseteq{\rm BV}(\hat{\bf X})
\]
with the corresponding (in)equalities of norms, whence \eqref{eq:intro_4} follows; see Section \ref{s:equiv_BV}.
\end{itemize}
The language of extended metric-topological measure spaces (or e.m.t.m.s., for short) -- which was introduced
in \cite{AmbrosioErbarSavare16} and investigated further in \cite{Sav:22,Amb:Sav:21} -- is well suited to optimal
transport problems and nonsmooth analysis in infinite-dimensional spaces. Indeed, the class of e.m.t.m.s.\ includes,
in addition to `standard' metric measure spaces, also abstract Wiener spaces \cite{Bog:07} and configuration spaces
\cite{AlbeverioKondratievRockner}, among others. BV functions and sets of finite perimeter have been researched
comprehensively on abstract Wiener spaces (see
e.g.\ \cite{fuk2000,hin04int,hin09set,AmbManMirPal,AmbMirPal10,AmbManMirPal2,GolNov,AmbrosioFigalliRuna,Mir:Nov:Pal:15})
and, more recently, also on configuration spaces \cite{Br:Su}. Whereas in this paper we introduce a BV theory for
e.m.t.m.s.\ mostly as a tool for studying BV spaces over metric measure spaces, we strongly believe it would be
interesting to develop it further, but we leave it for a future work.
\medskip

We conclude the introduction with a list of questions that are left unanswered after this paper:
\begin{itemize}
\item Under which conditions is the predual \(\mathbb W_q({\bf X})/\mathbb V_q({\bf X})\) of \({\rm BV}_p({\bf X})\) separable?
\item Is the predual of \({\rm BV}_p({\bf X})\) unique?
\item Is it true that \({\rm BV}({\bf Y})={\rm BV}_*({\bf Y})\), with the same norms, for every e.m.t.m.s.\ \({\bf Y}\)?
\item Does \({\rm BV}_p({\bf Y})\) have a predual for every e.m.t.m.s.\ \({\bf Y}\)?
\end{itemize}
\textbf{Acknowledgements.} This work was supported by the Research Council of Finland grant 362898. I am grateful to Elio Marconi
for the many enlightening discussions and to Veikko Vuolasto for the careful reading of an earlier draft of this manuscript.
\section{Preliminaries}
\subsection{Tools in Banach space theory}
Let us recall some notions and results concerning Banach spaces. Given a Banach space \(\mathbb V=(\mathbb V,\|\cdot\|_{\mathbb V})\),
we denote by \(\mathbb V^*\) its dual Banach space. The duality pairing between \(\omega\in\mathbb V^*\) and \(v\in\mathbb V\) will be
denoted by \(\langle\omega,v\rangle\coloneqq\omega(v)\in\R\). Furthermore:
\begin{enumerate}[(A)]
\item\label{item:predual} We say that two Banach spaces \(\mathbb V\) and \(\mathbb W\) are \textbf{isomorphic},
and we write \(\mathbb V\approx\mathbb W\), if there exists an isomorphism \(\phi\colon\mathbb V\to\mathbb W\)
of Banach spaces (i.e.\ a linear homeomorphism). If \(\mathbb V\approx\mathbb W^*\), then we say that \(\mathbb W\) is a \textbf{predual} of \(\mathbb V\).
\item\label{item:isometric_predual} We say that \(\mathbb V\) and \(\mathbb W\) are \textbf{isometrically isomorphic},
and we write \(\mathbb V\approx_1\mathbb W\), if there exists an isometric isomorphism
\(\phi\colon\mathbb V\to\mathbb W\) of Banach spaces (i.e.\ a linear isometric homeomorphism).
If \(\mathbb V\approx_1\mathbb W^*\), then we say that \(\mathbb W\) is an \textbf{isometric predual} of \(\mathbb V\).
\item\label{item:ell_infty_product} The Banach space \(\mathbb V\times_\infty\mathbb W\)
is defined as the product vector space \(\mathbb V\times\mathbb W\) endowed with the norm
\(\|(v,w)\|_\infty\coloneqq\max\{\|v\|_{\mathbb V},\|w\|_{\mathbb W}\}\). It holds that
\[
(\mathbb V\times_\infty\mathbb W)^*\approx_1\mathbb V^*\times_1\mathbb W^*,
\]
where the Banach space \(\mathbb V^*\times_1\mathbb W^*\) is defined as the product vector space
\(\mathbb V^*\times\mathbb W^*\) endowed with \(\|(\omega,\eta)\|_1\coloneqq\|\omega\|_{\mathbb V^*}+\|\eta\|_{\mathbb W^*}\).
The duality pairing between \(\mathbb V^*\times_1\mathbb W^*\) and \(\mathbb V\times_\infty\mathbb W\) is
\[
(\mathbb V^*\times_1\mathbb W^*)\times(\mathbb V\times_\infty\mathbb W)\ni((\omega,\eta),(v,w))
\mapsto\langle\omega,v\rangle+\langle\eta,w\rangle\in\R.
\]
\item\label{item:bdd_bilin} We denote by \(\mathcal L(\mathbb V;\mathbb W)\) the Banach space of all bounded linear operators
\(T\colon\mathbb V\to\mathbb W\) endowed with the operator norm \(\|\cdot\|_{\mathcal L(\mathbb V;\mathbb W)}\).
Moreover, we denote by \(\mathcal B(\mathbb V,\mathbb W)\) the Banach space of all bounded bilinear operators
\(B\colon\mathbb V\times\mathbb W\to\R\) endowed with the norm
\[
\|B\|_{\mathcal B(\mathbb V,\mathbb W)}\coloneqq\sup\big\{|B(v,w)|\;\big|\;(v,w)\in\mathbb V\times\mathbb W,\,
\|v\|_{\mathbb V},\|w\|_{\mathbb W}\leq 1\big\}.
\]
It holds that \(\mathcal B(\mathbb V,\mathbb W)\approx_1\mathcal L(\mathbb V;\mathbb W^*)\), via the isometric
isomorphism of Banach spaces given by
\[
\mathcal B(\mathbb V,\mathbb W)\ni B\longmapsto\big(\mathbb V\ni v\mapsto B(v,\cdot)\in\mathbb W^*\big)\in\mathcal L(\mathbb V;\mathbb W^*).
\]
\item\label{item:proj_tensor_prod} We denote by \((\mathbb V\hat\otimes_\pi\mathbb W,\|\cdot\|_\pi)\) the \textbf{projective tensor product}
of the Banach spaces \(\mathbb V\) and \(\mathbb W\), see e.g.\ \cite[Section 2]{Ryan02}. We remind that \(\mathbb V\hat\otimes_\pi\mathbb W\)
is the Banach space obtained as the completion of the normed space \(\mathbb V\otimes_\pi\mathbb W=(\mathbb V\otimes\mathbb W,\|\cdot\|_\pi)\),
where \(\mathbb V\otimes\mathbb W\) is the tensor product of \(\mathbb V\) and \(\mathbb W\) in the sense of vector spaces, while \(\|\cdot\|_\pi\)
denotes the \textbf{projective norm}
\[
\|\alpha\|_\pi\coloneqq\inf\bigg\{\sum_{i=1}^n\|v_i\|_{\mathbb V}\|w_i\|_{\mathbb W}\;\bigg|\;n\in\N,\,(v_i)_{i=1}^n\subseteq\mathbb V,\,
(w_i)_{i=1}^n\subseteq\mathbb W,\,\alpha=\sum_{i=1}^n v_i\otimes w_i\bigg\}.
\]
Recall also that \(\mathcal B(\mathbb V,\mathbb W)\approx_1(\mathbb V\hat\otimes_\pi\mathbb W)^*\), via the isometric
isomorphism that associates to any given \(B\in\mathcal B(\mathbb V,\mathbb W)\) the unique element
\(\bar B\in(\mathbb V\hat\otimes_\pi\mathbb W)^*\) satisfying \(\bar B(v\otimes w)=B(v,w)\) for every
\(v\in\mathbb V\) and \(w\in\mathbb W\); see e.g.\ \cite[Theorem 2.9]{Ryan02}.
\item\label{item:annihilator} If \(\mathbb V\) is a subspace of \(\mathbb W\), then we denote by \(\mathbb V^\perp_{\mathbb W}\)
the \textbf{annihilator} of \(\mathbb V\) in \(\mathbb W^*\), i.e.
\[
\mathbb V^\perp_{\mathbb W}\coloneqq
\big\{\omega\in\mathbb W^*\;\big|\;\langle\omega,v\rangle=0\text{ for every }v\in\mathbb V\big\}.
\]
It holds that \(\mathbb V^\perp_{\mathbb W}\approx_1(\mathbb W/\mathbb V)^*\), via the isometric isomorphism of Banach spaces given by
\[
\mathbb V^\perp_{\mathbb W}\ni\omega\longmapsto
\big(\mathbb W/\mathbb V\ni v+\mathbb V\mapsto\langle\omega,v\rangle\in\R\big)\in(\mathbb W/\mathbb V)^*.
\]
\item\label{item:C_0(X)} Let \((\X,\tau)\) be a locally-compact Hausdorff space. We denote by \(C_b(\X,\tau)\) the vector space
of all bounded real-valued continuous functions on \((\X,\tau)\). The space \(C_b(\X,\tau)\) is a Banach space if endowed
with the supremum norm \(\|f\|_{C_b(\X,\tau)}\coloneqq\sup_{x\in\X}|f(x)|\). We then consider its closed linear subspace
\(C_0(\X,\tau)\), which is defined as
\[
C_0(\X,\tau)\coloneqq{\rm cl}_{C_b(\X,\tau)}(C_c(\X,\tau)),
\]
where \(C_c(\X,\tau)\) denotes the set of all those functions in \(C_b(\X,\tau)\) having compact support.
\item\label{item:M(X)} We denote by \(\mathcal M(\X,\tau)\)
the vector space of all finite signed Radon measures on \((\X,\tau)\). The space \(\mathcal M(\X,\tau)\) is a
Banach space if endowed with the total variation norm \(\|\cdot\|_{\rm TV}\). The Riesz--Markov--Kakutani
theorem states that \(\mathcal M(\X,\tau)\approx_1 C_0(\X,\tau)^*\), with duality pairing
\[
\langle\mu,f\rangle\coloneqq\int f\,\d\mu\quad\text{ for every }\mu\in\mathcal M(\X,\tau)\text{ and }f\in C_0(\X,\tau).
\]
\end{enumerate}
We refer e.g.\ to \cite{FHHMZ11} for a thorough treatise on Banach spaces.
\subsection{Extended metric-topological measure spaces}
Let us recall the notion of extended metric-topological measure space, which was first
introduced in \cite{AmbrosioErbarSavare16} and studied further in \cite{Sav:22}:
\begin{definition}[Extended metric-topological measure space]\label{def:emms}
A quadruple \({\bf X}=(\X,\tau,\sfd,\mm)\) is said to be an \textbf{extended metric-topological measure space}
(or an \textbf{e.m.t.m.s.}, for short) provided:
\begin{enumerate}[(i)]
\item \((\X,\tau)\) is a topological space and \((\X,\sfd)\) is an extended metric space.
\item\label{it:emtms_top} The topology \(\tau\) is generated by \(\LIP_b(\X,\tau,\sfd)\coloneqq C_b(\X,\tau)\cap\LIP(\X,\sfd)\).
\item\label{it:emtms_recov} We have
\[
\sfd(x,y)=\sup\big\{|f(x)-f(y)|\;\big|\;f\in\LIP_b(\X,\tau,\sfd),\,\Lip(f)\leq 1\big\}\quad\text{ for every }x,y\in\X.
\]
\item \(\mm\) is a finite, non-negative Radon measure on \((\X,\tau)\).
\end{enumerate}
\end{definition}

Condition \eqref{it:emtms_top} implies that \(\tau\) is the topology generated by \(C_b(\X,\tau)\),
or equivalently that \((\X,\tau)\) is a completely regular space. Condition \eqref{it:emtms_recov}
ensures that \(\tau\) is a Hausdorff topology (thus, \((\X,\tau)\) is a Tychonoff space) and that
\(\sfd\colon\X\times\X\to[0,+\infty]\) is a \(\tau\otimes\tau\)-lower semicontinuous function.
\medskip

We now collect some properties of \(\LIP_b(\X,\tau,\sfd)\). It follows from \cite[Lemma 2.1.27]{Sav:22} that 
\begin{equation}\label{eq:LIP_dense_L1}
\LIP_b(\X,\tau,\sfd)\quad\text{ is dense in }L^1(\mm).
\end{equation}
Moreover, \(\LIP_b(\X,\tau,\sfd)\) is a Banach algebra with respect to the pointwise operations and the norm
\[
\|f\|_{\LIP_b(\X,\tau,\sfd)}\coloneqq\|f\|_{C_b(\X,\tau)}+\Lip(f),
\]
as it was observed in \cite[Remark 2.1]{PasTai25}.
To any \(f\in\LIP_b(\X,\tau,\sfd)\), we associate the \textbf{asymptotic Lipschitz constant} function
\(\lip_a(f)\colon\X\to[0,+\infty)\), which is defined as
\[
\lip_a(f)(x)\coloneqq\inf\big\{\Lip(f|_U)\;\big|\;x\in U\in\tau\big\}\quad\text{ for every }x\in\X.
\]
We have that \(\lip_a(f)\colon\X\to[0,+\infty)\) is \(\tau\)-lower semicontinuous.
\begin{definition}[Gelfand compactification of an e.m.t.m.s.]
Let \({\bf X}=(\X,\tau,\sfd,\mm)\) be an e.m.t.m.s. Then we denote by \(\hat{\bf X}=(\hat\X,\hat\tau,\hat\sfd,\hat\mm)\) the
\textbf{Gelfand compactification} of \({\bf X}\) and by \(\iota\colon\X\hookrightarrow\hat\X\) the associated embedding map,
as in \cite[Section 2.1.7]{Sav:22}.
\end{definition}

Below we collect the properties of the Gelfand compactification \(\hat{\bf X}\) of \({\bf X}\), cf.\ \cite[Theorem 2.1.34]{Sav:22}:
\begin{itemize}
\item \(\hat{\bf X}=(\hat\X,\hat\tau,\hat\sfd,\hat\mm)\) is an e.m.t.m.s.\ with \((\hat\X,\hat\tau)\) compact,
\item \(\iota\colon(\X,\tau)\to(\hat\X,\hat\tau)\) is a homeomorphism onto its image and \(\iota(\X)\) is dense
in \((\hat\X,\hat\tau)\),
\item \(\hat\sfd(\iota(x),\iota(y))=\sfd(x,y)\) for every \(x,y\in\X\),
\item \(\hat\mm=\iota_\#\mm\), i.e.\ \(\hat\mm\) is the pushforward measure of \(\mm\) under \(\iota\),
\item the map \(\iota^*\colon L^1(\hat\mm)\to L^1(\mm)\) (given by Remark \ref{rmk:compos_bdd_compr} below) is an isomorphism of Banach
spaces and of Riesz spaces, whose inverse we denote by \(\iota_*\colon L^1(\mm)\to L^1(\hat\mm)\).
\end{itemize}
\begin{remark}\label{rmk:compos_bdd_compr}{\rm
Let \((\X,\Sigma,\mm)\), \((\Y,\Omega,\mathfrak n)\) be finite measure spaces. Let \(\phi\colon(\X,\Sigma)\to(\Y,\Omega)\)
be a measurable map with \(\phi_\#\mm\leq C\mathfrak n\) for some \(C>0\). Then \(\phi\) induces, by pre-composition, a map
\[
\phi^*\colon L^1(\mathfrak n)\to L^1(\mm).
\]
Namely, \(\phi^*\) maps the \(\mathfrak n\)-a.e.\ equivalence class of any given \(\mathfrak n\)-integrable
function \(\bar f\colon\Y\to\R\) to the \(\mm\)-a.e.\ equivalence class of \(\bar f\circ\phi\). Clearly,
the map \(\phi^*\) is linear and \(\|\phi^*f\|_{L^1(\mm)}\leq C\|f\|_{L^1(\mathfrak n)}\) for every \(f\in L^1(\mathfrak n)\).
\fr}\end{remark}

It was proved in \cite[Lemma 2.13]{PasTai25} that, for any given e.m.t.m.s.\ \({\bf X}=(\X,\tau,\sfd,\mm)\), the operator
\[
\LIP_b(\hat\X,\hat\tau,\hat\sfd)\ni\hat f\mapsto\hat f\circ\iota\in\LIP_b(\X,\tau,\sfd)
\]
is an isomorphism of Banach algebras. We denote it by \(\iota^*\colon\LIP_b(\hat\X,\hat\tau,\hat\sfd)\to\LIP_b(\X,\tau,\sfd)\),
while its inverse will be denoted by \(\iota_*\colon\LIP_b(\X,\tau,\sfd)\to\LIP_b(\hat\X,\hat\tau,\hat\sfd)\).
\medskip

In this paper, our interest is mostly focussed on the distinguished subclass of metric measure spaces,
by which we mean those e.m.t.m.s.\ \((\X,\tau,\sfd,\mm)\) where \(\sfd\) is a (non-extended) distance and \(\tau\)
is exactly the topology induced by \(\sfd\). Namely:
\begin{definition}[Metric measure space]\label{def:mms}
We say that a triple \({\bf X}=(\X,\sfd,\mm)\) is a \textbf{metric measure space} (or an \textbf{m.m.s.}, for short)
provided \((\X,\sfd)\) is a metric space and \(\mm\) is a finite, non-negative Radon measure on \((\X,\sfd)\).
\end{definition}
\subsection{Lipschitz derivations}
Let us recall the concept of Lipschitz derivation for e.m.t.m.s., which was introduced in
\cite{PasTai25} as a generalisation of the definition given in \cite{DiMar:14,DiMaPhD:14} (see also \cite{Weaver2018}).
\begin{definition}[Derivation]\label{def:der}
Let \({\bf X}=(\X,\tau,\sfd,\mm)\) be an e.m.t.m.s. Then we say that a linear operator
\(b\colon\LIP_b(\X,\tau,\sfd)\to L^\infty(\mm)\) is a \textbf{derivation} on \({\bf X}\) if the following conditions hold:
\begin{enumerate}[(i)]
\item \textsc{Leibniz rule.} We have \(b(fg)=f\,b(g)+g\,b(f)\) for every \(f,g\in\LIP_b(\X,\tau,\sfd)\).
\item \textsc{Continuity property.} There exists a function \(G\in L^\infty(\mm)^+\) such that
\begin{equation}\label{eq:cont_der}
|b(f)|\leq G\,\lip_a(f)\quad\text{ in the }\mm\text{-a.e.\ sense, for every }f\in\LIP_b(\X,\tau,\sfd).
\end{equation}
\end{enumerate}
We denote by \({\rm Der}^\infty({\bf X})\) the collection of all derivations on \(\bf X\).
\end{definition}

Note that \({\rm Der}^\infty({\bf X})\) is a module over the ring \(L^\infty(\mm)\) (thus in particular a vector space) if
endowed with the usual pointwise operations. Given any \(b\in{\rm Der}^\infty({\bf X})\), the set of all \(G\in L^\infty(\mm)^+\)
satisfying \eqref{eq:cont_der} is a closed sublattice of \(L^\infty(\mm)^+\), thus it admits a unique \(\mm\)-a.e.\ minimal
element \(|b|\in L^\infty(\mm)^+\). Moreover, \({\rm Der}^\infty({\bf X})\) is a Banach space with respect to the norm
\[
\|b\|_{{\rm Der}^\infty({\bf X})}\coloneqq\||b|\|_{L^\infty(\mm)}\quad\text{ for every }b\in{\rm Der}^\infty({\bf X}).
\]
\begin{definition}[Divergence]\label{def:div}
Let \({\bf X}=(\X,\tau,\sfd,\mm)\) be an e.m.t.m.s. Then we say that a derivation \(b\in{\rm Der}^\infty({\bf X})\)
\textbf{has divergence} if there exists a function \(\div(b)\in L^\infty(\mm)\) such that
\[
\int b(f)\,\d\mm=-\int f\,\div(b)\,\d\mm\quad\text{ for every }f\in\LIP_b(\X,\tau,\sfd).
\]
We denote by \({\rm Der}^\infty_\infty({\bf X})\) the collection of all derivations on \({\bf X}\) having divergence.
\end{definition}

Note that \(\div(b)\) is uniquely determined by the density of \(\LIP_b(\X,\tau,\sfd)\) in \(L^1(\mm)\), cf.\ \eqref{eq:LIP_dense_L1}.
It can be readily checked that \({\rm Der}^\infty_\infty({\bf X})\) is a vector subspace of
\({\rm Der}^\infty({\bf X})\) and \(\div\colon{\rm Der}^\infty_\infty({\bf X})\to L^\infty(\mm)\) is
a linear operator. In particular, we have that
\[
\overline{\rm Der}^\infty_\infty({\bf X})\coloneqq{\rm cl}_{{\rm Der}^\infty({\bf X})}({\rm Der}^\infty_\infty({\bf X}))
\]
is a Banach space with respect to the norm induced by \({\rm Der}^\infty({\bf X})\).
\medskip

Under suitable topological assumptions, the space \({\rm Der}^\infty_\infty({\bf X})\) can be identified with
\({\rm Der}^\infty_\infty(\hat{\bf X})\):
\begin{proposition}\label{prop:der_on_Gelfand_cpt}
Let \({\bf X}=(\X,\tau,\sfd,\mm)\) be an e.m.t.m.s.\ such that the topology \(\tau\) is metrisable on each \(\tau\)-compact subset
of \(\X\). Let us define the operator \(\iota_*\colon{\rm Der}^\infty_\infty({\bf X})\to{\rm Der}^\infty_\infty(\hat{\bf X})\) as
\[
(\iota_*b)(\hat f)\coloneqq\iota_*(b(\iota^*\hat f))\in L^\infty(\hat\mm)\quad\text{ for every }b\in{\rm Der}^\infty_\infty({\bf X})
\text{ and }\hat f\in\LIP_b(\hat\X,\hat\tau,\hat\sfd).
\]
Then we have that \(\iota_*\colon{\rm Der}^\infty_\infty({\bf X})\to{\rm Der}^\infty_\infty(\hat{\bf X})\) is a linear isomorphism,
whose inverse we denote by \(\iota^*\colon{\rm Der}^\infty_\infty(\hat{\bf X})\to{\rm Der}^\infty_\infty({\bf X})\).
Moreover, given any \(b\in{\rm Der}^\infty_\infty({\bf X})\) and \(h\in L^\infty(\mm)\), we have
\[
\iota_*(hb)=(\iota_*h)(\iota_*b),\qquad|\iota_*b|=\iota_*|b|,\qquad\div(\iota_*b)=\iota_*(\div(b)).
\]
\end{proposition}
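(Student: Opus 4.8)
The plan is to first check that the formula defining \(\iota_* b\) does produce a derivation on \(\hat{\bf X}\), then that it maps \({\rm Der}^\infty_\infty({\bf X})\) into \({\rm Der}^\infty_\infty(\hat{\bf X})\), and finally to produce an explicit inverse. For the first point, fix \(b\in{\rm Der}^\infty_\infty({\bf X})\). The map \(\hat f\mapsto\iota_*(b(\iota^*\hat f))\) is the composition of three linear maps (\(\iota^*\) on Lipschitz algebras, \(b\), and \(\iota_*\) on \(L^\infty\)), hence linear. Since \(\iota^*\colon\LIP_b(\hat\X,\hat\tau,\hat\sfd)\to\LIP_b(\X,\tau,\sfd)\) is an algebra homomorphism and \(\iota_*\colon L^\infty(\mm)\to L^\infty(\hat\mm)\) (the \(L^\infty\) version of the Riesz-space isomorphism, which is multiplicative) is too, the Leibniz rule for \(b\) transfers: \((\iota_*b)(\hat f\hat g)=\iota_*\!\big(b(\iota^*\hat f\cdot\iota^*\hat g)\big)=\iota_*\!\big(\iota^*\hat f\,b(\iota^*\hat g)+\iota^*\hat g\,b(\iota^*\hat f)\big)=\hat f\,(\iota_*b)(\hat g)+\hat g\,(\iota_*b)(\hat f)\). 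For the continuity property, I need \(|(\iota_*b)(\hat f)|\le(\iota_*|b|)\,\lip_a(\hat f)\) \(\hat\mm\)-a.e. Starting from \(|b(\iota^*\hat f)|\le|b|\,\lip_a(\iota^*\hat f)\) \(\mm\)-a.e.\ and applying \(\iota_*\) (which is a positive Riesz isomorphism, hence preserves \(\le\), \(|\cdot|\) and products), this reduces to the pointwise comparison \(\iota_*(\lip_a(\iota^*\hat f))\le\lip_a(\hat f)\). This is where the hypothesis that \(\tau\) is metrisable on \(\tau\)-compact sets enters: writing \(f=\iota^*\hat f=\hat f\circ\iota\), for a point \(x\) and a neighbourhood \(U\ni x\) one must relate \(\Lip(f|_U)\) computed with \(\sfd\) to \(\Lip(\hat f|_{\hat U})\) computed with \(\hat\sfd\) near \(\iota(x)\); since \(\iota\) is a homeomorphism onto its (dense) image and \(\hat\sfd(\iota(\cdot),\iota(\cdot))=\sfd(\cdot,\cdot)\), one gets \(\lip_a(\hat f)(\iota(x))\le\lip_a(f)(x)\), and a density/metrisability argument (this is essentially \cite[Lemma 2.13]{PasTai25} and its neighbourhood, or can be extracted from the proof there) upgrades this to the needed \(\hat\mm\)-a.e.\ inequality. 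I expect this step — the interplay between the asymptotic Lipschitz constants on \(\X\) and on \(\hat\X\), and justifying it \(\hat\mm\)-a.e.\ rather than just on \(\iota(\X)\) — to be the main obstacle; everything else is formal.

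Next, \(\iota_*b\) has a divergence. For \(\hat f\in\LIP_b(\hat\X,\hat\tau,\hat\sfd)\), using that \(\iota^*\colon L^1(\hat\mm)\to L^1(\mm)\) is the \(L^1\)-isometry induced by \(\iota\) (with \(\int g\circ\iota\,\d\mm=\int g\,\d\hat\mm\) since \(\hat\mm=\iota_\#\mm\)) and that on \(L^\infty\cap L^1\) the maps \(\iota_*\), \(\iota^*\) are mutually inverse and compatible with integration:
\[
\int(\iota_*b)(\hat f)\,\d\hat\mm=\int\iota_*\!\big(b(\iota^*\hat f)\big)\,\d\hat\mm=\int b(\iota^*\hat f)\,\d\mm=-\int(\iota^*\hat f)\,\div(b)\,\d\mm=-\int\hat f\,\iota_*(\div(b))\,\d\hat\mm,
\]
so \(\div(\iota_*b)=\iota_*(\div(b))\in L^\infty(\hat\mm)\); in particular \(\iota_*b\in{\rm Der}^\infty_\infty(\hat{\bf X})\). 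This simultaneously proves the third displayed identity.

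Finally, I would define \(\iota^*\colon{\rm Der}^\infty_\infty(\hat{\bf X})\to{\rm Der}^\infty_\infty({\bf X})\) by the symmetric formula \((\iota^*\hat b)(f)\coloneqq\iota^*\!\big(\hat b(\iota_*f)\big)\) for \(f\in\LIP_b(\X,\tau,\sfd)\), check by the same arguments (now using that \(\hat{\bf X}\) is itself an e.m.t.m.s., that \((\hat\X,\hat\tau)\) is compact hence \(\tau\)-compact-metrisability is automatic on \(\hat\X\), and that \(\iota_*\colon\LIP_b(\X,\tau,\sfd)\to\LIP_b(\hat\X,\hat\tau,\hat\sfd)\) is the algebra isomorphism inverse to \(\iota^*\)) that it lands in \({\rm Der}^\infty_\infty({\bf X})\), and then verify \(\iota^*\circ\iota_*={\rm id}\) and \(\iota_*\circ\iota^*={\rm id}\). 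The latter follows by unwinding the definitions and invoking that \(\iota^*\iota_*={\rm id}\) both on the Lipschitz algebras and on \(L^\infty\) (equivalently \(L^1\)): e.g.\ \((\iota^*\iota_*b)(f)=\iota^*\big((\iota_*b)(\iota_*f)\big)=\iota^*\big(\iota_*(b(\iota^*\iota_*f))\big)=b(f)\). Linearity of \(\iota_*\) is immediate from the formula; thus \(\iota_*\) is a linear isomorphism with inverse \(\iota^*\). For the remaining two identities, \(|\iota_*b|=\iota_*|b|\) follows because \(\iota_*|b|\) satisfies the defining inequality \eqref{eq:cont_der} for \(\iota_*b\) (shown above) and is minimal: any \(\hat G\in L^\infty(\hat\mm)^+\) with \(|(\iota_*b)(\hat f)|\le\hat G\,\lip_a(\hat f)\) pulls back via \(\iota^*\) to an admissible \(G=\iota^*\hat G\ge|b|\) for \(b\) (here one uses \(\lip_a(\iota^*\hat f)\le\iota^*(\lip_a\hat f)\), the reverse of the earlier comparison, which on \(\iota(\X)\) is an equality), whence \(\hat G\ge\iota_*|b|\). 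The identity \(\iota_*(hb)=(\iota_*h)(\iota_*b)\) for \(h\in L^\infty(\mm)\) is a direct computation: \((\iota_*(hb))(\hat f)=\iota_*\big((hb)(\iota^*\hat f)\big)=\iota_*\big(h\,b(\iota^*\hat f)\big)=(\iota_*h)\,\iota_*\big(b(\iota^*\hat f)\big)=(\iota_*h)(\iota_*b)(\hat f)\), using multiplicativity of \(\iota_*\) on \(L^\infty\). \fr
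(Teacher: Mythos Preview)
The paper does not give its own proof of this proposition; it simply records that ``the above result was proved in \cite[Proposition 4.17]{PasTai25}''. So there is no in-paper argument to compare against, and your plan is effectively an attempt to reconstruct the proof from \cite{PasTai25}. The overall architecture you describe --- transport the Leibniz rule through the algebra isomorphisms \(\iota^*,\iota_*\), compare asymptotic Lipschitz constants to get the continuity bound, compute the divergence by change of variables under \(\hat\mm=\iota_\#\mm\), and write down the inverse by the symmetric formula --- is the natural one and matches what one expects from \cite{PasTai25}.

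Two concrete issues in your write-up. First, there is a direction slip: you correctly state that what is needed is \(\iota_*(\lip_a(\iota^*\hat f))\le\lip_a(\hat f)\), which at a point \(\iota(x)\) reads \(\lip_a(f)(x)\le\lip_a(\hat f)(\iota(x))\); but you then claim the argument yields \(\lip_a(\hat f)(\iota(x))\le\lip_a(f)(x)\), which is the opposite inequality. The inequality you actually need here is the \emph{easy} one (pull back a \(\hat\tau\)-neighbourhood of \(\iota(x)\) via \(\iota\) and use that \(\iota\) is an isometric embedding), and it does not require the metrisability hypothesis. The \emph{hard} direction \(\lip_a(\hat f)(\iota(x))\le\lip_a(f)(x)\) is what you invoke later for minimality in \(|\iota_*b|=\iota_*|b|\) and for showing \(\iota^*\hat b\in{\rm Der}^\infty_\infty({\bf X})\); that is where the hypothesis on \(\tau\) genuinely enters, via the analysis of \(\lip_a\) under the Gelfand embedding carried out in \cite{PasTai25}. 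Second, your parenthetical ``\((\hat\X,\hat\tau)\) is compact hence \(\tau\)-compact-metrisability is automatic on \(\hat\X\)'' is false: compactness of \((\hat\X,\hat\tau)\) does not make \(\hat\tau\) metrisable. The inverse map does not follow from applying the forward argument with the roles of \({\bf X}\) and \(\hat{\bf X}\) swapped; rather, one uses the same comparison of asymptotic Lipschitz constants on \({\bf X}\) (the one requiring metrisability on compacts) in the reverse direction. Once that comparison is in hand, the rest of your computations are correct.
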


The above result was proved in \cite[Proposition 4.17]{PasTai25}.
\section{Functions of bounded variation on extended spaces}
\subsection{Definition by means of derivations}\label{s:BV_der}
We propose the following definition of BV function on e.m.t.m.s., which is a natural
extension of the one for m.m.s.\ that was introduced in \cite{DiMar:14,DiMaPhD:14}:
\begin{definition}[Function of bounded variation]\label{def:BV_der}
Let \({\bf X}=(\X,\tau,\sfd,\mm)\) be an e.m.t.m.s. Let \(f\in L^1(\mm)\) be given.
Then we say that \(f\) is a \textbf{function of bounded variation} provided there exists a bounded linear
operator \({\bf L}_f\colon({\rm Der}^\infty_\infty({\bf X}),\|\cdot\|_{{\rm Der}^\infty({\bf X})})\to\mathcal M(\X,\tau)\) such that
\begin{subequations}\begin{align}
\label{eq:def_BV_der_1}
{\bf L}_f(b)(\X)=-\int f\,\div(b)\,\d\mm&\quad\text{ for every }b\in{\rm Der}^\infty_\infty({\bf X}),\\
\label{eq:def_BV_der_2}
{\bf L}_f(hb)=h\,{\bf L}_f(b)&\quad\text{ for every }h\in\LIP_b(\X,\tau,\sfd)\text{ and }b\in{\rm Der}^\infty_\infty({\bf X}).
\end{align}\end{subequations}
We denote by \({\rm BV}({\bf X})\) the space of all functions of bounded variation on \({\bf X}\).
\end{definition}

For any \(f\in{\rm BV}({\bf X})\), the map \({\bf L}_f\) is uniquely determined: if \(\tilde{\bf L}_f\) has the same properties, then
\[
\int h\,\d{\bf L}_f(b)={\bf L}_f(hb)=-\int f\,\div(hb)\,\d\mm=\tilde{\bf L}_f(hb)=\int h\,\d\tilde{\bf L}_f(b)
\]
for every \(b\in{\rm Der}^\infty_\infty({\bf X})\) and \(h\in\LIP_b(\X,\tau,\sfd)\), so that
\({\bf L}_f(b)=\tilde{\bf L}_f(b)\) by the density of \(\LIP_b(\X,\tau,\sfd)\) in \(L^1(|{\bf L}_f(b)-\tilde{\bf L}_f(b)|)\).
Also, \({\bf L}_f\) can be uniquely extended to an element of
\(\mathcal L\big(\overline{\rm Der}^\infty_\infty({\bf X});\mathcal M(\X,\tau)\big)\), which we still denote by \({\bf L}_f\).
It is easy to check that \({\rm BV}({\bf X})\) is a vector subspace of \(L^1(\mm)\), as well as a Banach space if endowed with the norm
\[
\|f\|_{{\rm BV}({\bf X})}\coloneqq\|f\|_{L^1(\mm)}+{\rm V}(f),
\]
where the \textbf{total variation} \({\rm V}(f)\) of the function \(f\) is defined as
\[
{\rm V}(f)\coloneqq\|{\bf L}_f\|_{\mathcal L(\overline{\rm Der}^\infty_\infty({\bf X});\mathcal M(\X,\tau))}.
\]

We also consider the following variants of the BV space, with a different integrability condition:
\begin{definition}[The space \({\rm BV}_p({\bf X})\)]\label{def:BV_der_p}
Let \({\bf X}=(\X,\tau,\sfd,\mm)\) be an e.m.t.m.s.\ and \(p\in[1,\infty)\). Then we define the vector subspace
\({\rm BV}_p({\bf X})\) of \({\rm BV}({\bf X})\) as
\[
{\rm BV}_p({\bf X})\coloneqq L^p(\mm)\cap{\rm BV}({\bf X}).
\]
Moreover, we endow the space \({\rm BV}_p({\bf X})\) with the norm
\[
\|f\|_{{\rm BV}_p({\bf X})}\coloneqq\|f\|_{L^p(\mm)}+{\rm V}(f).
\]
\end{definition}

Note that \({\rm BV}_1({\bf X})={\rm BV}({\bf X})\). Moreover, it is easy to check that \({\rm BV}_p({\bf X})\)
is a Banach space. To this aim, suppose \((f_n)_n\subseteq{\rm BV}_p({\bf X})\) is Cauchy. In particular,
\((f_n)_n\) is Cauchy in \(L^p(\mm)\), thus there exists \(f\in L^p(\mm)\) such that \(f_n\to f\) in \(L^p(\mm)\).
Moreover, \((f_n)_n\) is Cauchy also in \({\rm BV}({\bf X})\) by H\"{o}lder's inequality, so that there
exists \(\tilde f\in{\rm BV}({\bf X})\) such that \(f_n\to\tilde f\) in \({\rm BV}({\bf X})\). In particular, we have
\(f_n\to\tilde f\) in \(L^1(\mm)\), thus \(f=\tilde f\) and \(f_n\to f\) in \({\rm BV}_p({\bf X})\). This shows
that \({\rm BV}_p({\bf X})\) is complete.
\subsection{Definition by means of Lipschitz approximations}\label{s:BV_rel}
Another notion of BV space for e.m.t.m.s.\ that we propose is the following one,
which generalises its m.m.s.\ version from \cite{Mir:03}:
\begin{definition}[Function of bounded variation in the relaxed sense]\label{def:BV_rel}
Let \({\bf X}=(\X,\tau,\sfd,\mm)\) be an e.m.t.m.s. Let \(f\in L^1(\mm)\) be given. Then we say that \(f\) is a
\textbf{function of bounded variation in the relaxed sense} provided there exists a sequence \((f_n)_n\subseteq\LIP_b(\X,\tau,\sfd)\)
such that
\[
f_n\to f\quad\text{ in }L^1(\mm),\qquad\sup_{n\in\N}\int\lip_a(f_n)\,\d\mm<+\infty.
\]
We denote by \({\rm BV}_*({\bf X})\) the space of all functions of bounded variation in the relaxed sense on \({\bf X}\).
\end{definition}

It is easy to check that \({\rm BV}_*({\bf X})\) is a vector subspace of \(L^1(\mm)\). For any \(f\in{\rm BV}_*({\bf X})\), we define
\[
{\rm V}_*(f)\coloneqq\inf\bigg\{\varliminf_{n\to\infty}\int\lip_a(f_n)\,\d\mm\;\bigg|\;(f_n)_n\subseteq\LIP_b(\X,\tau,\sfd),
\,f_n\to f\text{ in }L^1(\mm)\bigg\}.
\]
Observe that, as an immediate consequence of the definition, we have
\begin{equation}\label{eq:const_BV}
f\in{\rm BV}_*({\bf X}),\quad{\rm V}_*(f)\leq\int\lip_a(f)\,\d\mm\quad\text{ for every }f\in\LIP_b(\X,\tau,\sfd).
\end{equation}
Moreover, we claim that the following chain rule holds:
\begin{equation}\label{eq:chain_rule_BV}
\phi\circ f\in{\rm BV}_*({\bf X}),\quad{\rm V}_*(\phi\circ f)\leq\Lip(\phi){\rm V}_*(f)\quad\text{ for every }
f\in{\rm BV}_*({\bf X})\text{ and }\phi\in\LIP(\R).
\end{equation}
Indeed, if \((f_n)_n\subseteq\LIP_b(\X,\tau,\sfd)\) is chosen so that \(f_n\to f\) in \(L^1(\mm)\) and \(\int\lip_a(f_n)\,\d\mm\to{\rm V}_*(f)\), then
\((\phi\circ f_n)_n\subseteq\LIP_b(\X,\tau,\sfd)\) and \(\int|\phi\circ f_n-\phi\circ f|\,\d\mm\leq\Lip(\phi)\int|f_n-f|\,\d\mm\to 0\), thus accordingly
\[
{\rm V}_*(\phi\circ f)\leq\varliminf_{n\to\infty}\int\lip_a(\phi\circ f_n)\,\d\mm\leq\Lip(\phi)\lim_{n\to\infty}\int\lip_a(f_n)\,\d\mm=\Lip(\phi){\rm V}_*(f).
\]

Also in this case, we consider the following variants of the BV space:
\begin{definition}[The space \({\rm BV}_{*,p}({\bf X})\)]\label{def:BV_rel_p}
Let \({\bf X}=(\X,\tau,\sfd,\mm)\) be an e.m.t.m.s.\ and \(p\in[1,\infty)\). Then we define the vector subspace
\({\rm BV}_{*,p}({\bf X})\) of \({\rm BV}_*({\bf X})\) as
\[
{\rm BV}_{*,p}({\bf X})\coloneqq L^p(\mm)\cap{\rm BV}_*({\bf X}).
\]
Moreover, we endow the space \({\rm BV}_{*,p}({\bf X})\) with the norm
\[
\|f\|_{{\rm BV}_{*,p}({\bf X})}\coloneqq\|f\|_{L^p(\mm)}+{\rm V}_*(f).
\]
\end{definition}

Note that \({\rm BV}_{*,1}({\bf X})={\rm BV}_*({\bf X})\), and that \({\rm BV}_{*,p}({\bf X})\) is a Banach space
for every \(p\in[1,\infty)\).
\subsection{Equivalence statements}\label{s:equiv_BV}
The goal of this section is to prove that if \({\bf X}\) is a metric measure space satisfying rather mild assumptions,
then the BV spaces we discussed in Sections \ref{s:BV_der} and \ref{s:BV_rel} are invariant under passing to the Gelfand compactification
\(\hat{\bf X}\); cf.\ Corollary \ref{cor:equiv_BV_on_Gelfand_cpt}.
\medskip

In the setting of metric measure spaces, we know from \cite[Theorem 4.9]{PasSod25} (after \cite{DiMar:14,DiMaPhD:14}) that
-- under mild assumptions -- the spaces \({\rm BV}({\bf X})\) and \({\rm BV}_*({\bf X})\) coincide:
\begin{theorem}\label{thm:BV=BV*_mms}
Let \({\bf X}=(\X,\sfd,\mm)\) be a m.m.s.\ such that \((\X,\sfd)\) is either complete or Radon. Then we have
\({\rm BV}({\bf X})={\rm BV}_*({\bf X})\) and \({\rm V}(f)={\rm V}_*(f)\) for every \(f\in{\rm BV}({\bf X})\).
\end{theorem}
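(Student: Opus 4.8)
**Proof plan for Theorem \ref{thm:BV=BV*_mms}.**

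The plan is to reduce the statement to the cited result \cite[Theorem 4.9]{PasSod25}, which establishes the identity ${\rm BV}({\bf X})={\rm BV}_*({\bf X})$ together with ${\rm V}={\rm V}_*$ in the metric measure space setting, once one checks that the definitions used here (Definitions \ref{def:BV_der} and \ref{def:BV_rel}) match the ones used there when $\tau$ is the topology induced by the metric $\sfd$. So the first step is a bookkeeping one: when $(\X,\sfd)$ is a genuine metric space and $\tau=\tau_\sfd$, the algebra $\LIP_b(\X,\tau,\sfd)$ coincides with the algebra of bounded $\sfd$-Lipschitz functions, $\lip_a(f)$ reduces to the usual asymptotic Lipschitz constant, and ${\rm Der}^\infty_\infty({\bf X})$ as in Definition \ref{def:div} is exactly the space of $L^\infty$-derivations with divergence from \cite{DiMar:14,DiMaPhD:14}; hence both ${\rm BV}$ and ${\rm BV}_*$ here agree verbatim with those in \cite{PasSod25}. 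Once this identification is in place, the theorem is a direct quotation.

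Alternatively, if one prefers a self-contained argument rather than a pure citation, I would prove the two inclusions separately. For ${\rm BV}_*({\bf X})\subseteq{\rm BV}({\bf X})$ with ${\rm V}(f)\le{\rm V}_*(f)$: given $f\in{\rm BV}_*({\bf X})$, pick $(f_n)_n\subseteq\LIP_b$ with $f_n\to f$ in $L^1(\mm)$ and $\int\lip_a(f_n)\,\d\mm\to{\rm V}_*(f)$. Each $f_n$ is itself BV with ${\bf L}_{f_n}(b)=b(f_n)\,\mm$, so $|{\bf L}_{f_n}(b)|\le|b|\,\lip_a(f_n)\,\mm$ and hence ${\rm V}(f_n)\le\int\lip_a(f_n)\,\d\mm$. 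The operators ${\bf L}_{f_n}\in\mathcal L(\overline{\rm Der}^\infty_\infty({\bf X});\mathcal M(\X,\tau))$ are uniformly bounded; one passes to a limit operator ${\bf L}_f$ by testing against $b$ and $h\in\LIP_b$, using $\int h\,\d{\bf L}_{f_n}(b)={\bf L}_{f_n}(hb)=-\int f_n\,\div(hb)\,\d\mm\to-\int f\,\div(hb)\,\d\mm$, and checks that the resulting functional in $h$ is represented by a measure of mass $\le{\rm V}_*(f)\,\||b|\|_{L^\infty(\mm)}$, which gives $f\in{\rm BV}({\bf X})$ and the norm bound. For the converse, ${\rm BV}({\bf X})\subseteq{\rm BV}_*({\bf X})$ with ${\rm V}_*(f)\le{\rm V}(f)$, one needs a Lipschitz approximation of a BV function with asymptotically optimal energy; this is exactly where the completeness-or-Radon hypothesis on $(\X,\sfd)$ is used, and it is the substance of \cite[Theorem 4.9]{PasSod25} (building on the mollification/relaxation techniques of \cite{DiMaPhD:14}).

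The main obstacle is therefore concentrated in the inclusion ${\rm BV}({\bf X})\subseteq{\rm BV}_*({\bf X})$: constructing, for a given $f$ with finite total variation ${\rm V}(f)$ defined via duality with derivations, an explicit sequence of Lipschitz functions converging in $L^1(\mm)$ whose asymptotic Lipschitz energies have liminf at most ${\rm V}(f)$. In a general metric measure space there is no heat flow or convolution available, so one relies on discrete-type mollifications through partitions of unity or through the relaxed functional machinery, and the hypothesis that $(\X,\sfd)$ be complete or Radon is what makes these constructions work. Since this is precisely the content of the quoted theorem, the cleanest route for the paper is to invoke it after the (routine) verification that the e.m.t.m.s.\ definitions specialize correctly to the metric setting; I would not reprove it here.
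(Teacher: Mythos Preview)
Your primary plan---reduce to a direct citation of \cite[Theorem 4.9]{PasSod25} after noting that for a genuine metric measure space the e.m.t.m.s.\ objects $\LIP_b(\X,\tau,\sfd)$, $\lip_a$, ${\rm Der}^\infty_\infty({\bf X})$ specialise to the standard metric ones---is exactly what the paper does; the theorem is stated there with no proof beyond the citation.

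One remark on your alternative sketch: you have the roles of the two inclusions reversed. The inclusion ${\rm BV}({\bf X})\subseteq{\rm BV}_*({\bf X})$ with ${\rm V}_*\leq{\rm V}$ does \emph{not} require the completeness-or-Radon hypothesis; in fact the paper proves it (Theorem \ref{thm:BV_in_BV*}) for an arbitrary e.m.t.m.s.\ via the Fenchel biconjugate argument from \cite{PasSod25}. It is the other inclusion ${\rm BV}_*({\bf X})\subseteq{\rm BV}({\bf X})$---i.e.\ producing, for a function approximable by Lipschitz functions with bounded energy, a genuine \emph{measure}-valued operator ${\bf L}_f$ satisfying \eqref{eq:def_BV_der_1}--\eqref{eq:def_BV_der_2}---that uses the hypothesis on $(\X,\sfd)$, essentially to guarantee that the bounded linear functional $h\mapsto-\int f\,\div(hb)\,\d\mm$ on $\LIP_b$ is represented by a signed Radon measure. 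Your sketch of that direction is morally right but glosses over exactly this representation step, which is where the assumption enters.
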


By a \emph{Radon} metric space we mean a metric space \((\X,\sfd)\) whose induced topological space \((\X,\tau_\sfd)\) is Radon,
meaning that every finite Borel measure on \((\X,\tau_\sfd)\) is a Radon measure. It is known, for instance, that every Borel
subset of a complete and separable metric space is a Radon space.
\medskip

In the general context of e.m.t.m.s., we can still prove the inclusion \({\rm BV}({\bf X})\subseteq{\rm BV}_*({\bf X})\):
\begin{theorem}\label{thm:BV_in_BV*}
Let \({\bf X}=(\X,\tau,\sfd,\mm)\) be an e.m.t.m.s. Then we have \({\rm BV}({\bf X})\subseteq{\rm BV}_*({\bf X})\) and
\[
{\rm V}_*(f)\leq{\rm V}(f)\quad\text{ for every }f\in{\rm BV}({\bf X}).
\]
\end{theorem}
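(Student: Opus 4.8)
The plan is to start from an arbitrary $f\in{\rm BV}({\bf X})$ and produce, for each fixed derivation $b\in{\rm Der}^\infty_\infty({\bf X})$, a control on the measure ${\bf L}_f(b)$ in terms of $|b|$ and the total variation ${\rm V}(f)$; then to bootstrap this into the existence of an approximating sequence of Lipschitz functions witnessing $f\in{\rm BV}_*({\bf X})$ with the right energy bound. The natural mechanism is the integration-by-parts formula \eqref{eq:def_BV_der_1} together with the localisation property \eqref{eq:def_BV_der_2}: for $h\in\LIP_b(\X,\tau,\sfd)$ and $b\in{\rm Der}^\infty_\infty({\bf X})$ one has $\int h\,\d{\bf L}_f(b)={\bf L}_f(hb)(\X)=-\int f\,\div(hb)\,\d\mm$, and since $\div(hb)=h\,\div(b)+b(h)$ this reads
\begin{equation*}
\int f\,b(h)\,\d\mm=-\int h\,\d{\bf L}_f(b)-\int f\,h\,\div(b)\,\d\mm .
\end{equation*}
Bounding the right-hand side by $\|h\|_{C_b(\X,\tau)}\big(|{\bf L}_f(b)|(\X)+\|f\|_{L^1(\mm)}\|\div(b)\|_{L^\infty(\mm)}\big)$ and $|{\bf L}_f(b)|(\X)\le{\rm V}(f)\,\|b\|_{{\rm Der}^\infty({\bf X})}$, one gets a uniform linear functional estimate on $h\mapsto\int f\,b(h)\,\d\mm$. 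The continuity property of derivations, $|b(h)|\le|b|\,\lip_a(h)$, is what will let us pass from "control against $b(h)$" to "control against $\lip_a(h)$", which is the quantity appearing in the definition of ${\rm V}_*$.

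The cleanest route I would actually take is to invoke the known metric case. Namely, I would not work directly on $\X$ but reduce to a situation where Theorem \ref{thm:BV=BV*_mms} applies — or, more honestly, mimic the "easy half" of its proof, which is precisely the inclusion ${\rm BV}\subseteq{\rm BV}_*$ and does not need completeness or the Radon property. Concretely: first establish that for $f\in{\rm BV}({\bf X})$ the functional
\begin{equation*}
L(h)\coloneqq-\int f\,\div(\cdot)\,\d\mm,\qquad h\in\LIP_b(\X,\tau,\sfd),
\end{equation*}
acting through all derivations, gives rise to a finite number
\begin{equation*}
\sup\Big\{\textstyle\int f\,\div(b)\,\d\mm \ \Big|\ b\in{\rm Der}^\infty_\infty({\bf X}),\ |b|\le 1\ \mm\text{-a.e.}\Big\}\le{\rm V}(f),
\end{equation*}
which is a Miranda-type "variational" total variation. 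Then I would run the standard relaxation/approximation argument — a Lipschitz mollification scheme (e.g. via the algebra $\LIP_b(\X,\tau,\sfd)$, using that it is dense in $L^1(\mm)$ by \eqref{eq:LIP_dense_L1} and is a Banach algebra) together with a lower-semicontinuity argument for $\int\lip_a(\cdot)\,\d\mm$ — to manufacture $(f_n)_n\subseteq\LIP_b(\X,\tau,\sfd)$ with $f_n\to f$ in $L^1(\mm)$ and $\varliminf_n\int\lip_a(f_n)\,\d\mm\le{\rm V}(f)$. This yields $f\in{\rm BV}_*({\bf X})$ and ${\rm V}_*(f)\le{\rm V}(f)$ simultaneously.

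The main obstacle is precisely the construction of the approximating sequence in the extended, merely-topological setting, since the usual convolution/mollification tools of the metric case are unavailable and one must lean entirely on the algebra $\LIP_b(\X,\tau,\sfd)$ and on duality: one wants to produce Lipschitz functions whose asymptotic Lipschitz constants integrate against $\mm$ to something controlled by the derivation-side total variation, without having a metric mollifier. I expect this to be handled by a Hahn–Banach / Riesz-type argument — interpreting $\int f\,\div(b)\,\d\mm$ as a bounded functional on the closure of $\{\div(b):|b|\le 1\}$ inside $L^1(\mm)$, representing it, and then relaxing — together with the already-available identification of derivations on ${\bf X}$ with those on $\hat{\bf X}$ (Proposition \ref{prop:der_on_Gelfand_cpt}) and the lower semicontinuity of $\lip_a$. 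Once the sequence is in hand, the inequality ${\rm V}_*(f)\le{\rm V}(f)$ is immediate from the definition of ${\rm V}_*$ as an infimum. A secondary, more bookkeeping-level difficulty is making sure the chain rule \eqref{eq:chain_rule_BV} and truncations are compatible with the approximation (so that one may assume $f$ bounded), but this is routine given the tools already set up in the excerpt.
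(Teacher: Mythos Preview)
Your proposal correctly isolates the easy inequality \(\sup\{-\int f\,\div(b)\,\d\mm : \|b\|_{{\rm Der}^\infty({\bf X})}\le 1\}\le{\rm V}(f)\), but the step you flag as ``the main obstacle'' is in fact the entire content of the theorem, and your outline does not resolve it. Two concrete gaps:

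First, the suggestion to ``reduce to a situation where Theorem \ref{thm:BV=BV*_mms} applies'' via Proposition \ref{prop:der_on_Gelfand_cpt} does not work: the Gelfand compactification \(\hat{\bf X}\) is again only an e.m.t.m.s., not a metric measure space, so Theorem \ref{thm:BV=BV*_mms} is unavailable there. (Indeed, the paper uses the present theorem on \(\hat{\bf X}\) as one of the four ingredients in Corollary \ref{cor:equiv_BV_on_Gelfand_cpt}, not the other way around.) Second, the phrases ``Lipschitz mollification scheme'' and ``Hahn--Banach / Riesz-type argument'' are placeholders rather than a mechanism: in the extended setting there is no mollifier, and a bare Hahn--Banach extension of the functional \(b\mapsto -\int f\,\div(b)\,\d\mm\) does not by itself produce Lipschitz approximants of \(f\).

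The paper's proof supplies exactly the missing bridge. It builds the cotangent module \(L^1(T^*{\bf X})\) with differential \(\d\) satisfying \(|\d f|=|Df|_*\), and observes that elements \(v\) in the domain of the adjoint \(\d^*\) induce derivations \(b_v\in{\rm Der}^\infty_\infty({\bf X})\) with \(\div(b_v)=-\d^*v\). Then, writing \(\tilde{\mathcal E}(f)=\int|Df|_*\,\d\mm\) on \(\LIP_b(\X,\tau,\sfd)\) and \(+\infty\) elsewhere, the relaxed energy \(\mathcal E={\rm V}_*\) is the \(L^1\)-lower semicontinuous envelope of \(\tilde{\mathcal E}\); convexity gives \(\mathcal E=\tilde{\mathcal E}^{**}\), and a result of Bouchitt\'e--Buttazzo--Seppecher computes the biconjugate in terms of \(\d^*\), yielding \({\rm V}_*(f)=\tilde{\mathcal E}^{**}(f)\le\sup\{-\int f\,\div(b)\,\d\mm : \|b\|\le 1\}\le{\rm V}(f)\). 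The cotangent-module step is what converts your abstract duality intuition into an actual supply of derivations dual to the relaxed gradient, and the Fenchel biconjugate replaces the unavailable mollification; neither ingredient appears in your plan.
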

\begin{proof}
The statement can be proved by repeating verbatim the arguments in \cite[Section 4]{PasSod25}. For the reader's usefulness, we give a brief sketch of the proof
(omitting some details):
\begin{enumerate}[(a)]
\item\label{it:min_relax_slope} For any \(f\in\LIP_b(\X,\tau,\sfd)\), there exists a (unique) \(\mm\)-a.e.\ minimal \(|Df|_*\in L^1(\mm)^+\),
called the \emph{minimal relaxed slope} of \(f\), such that there exists \((f_n)_n\subseteq\LIP_b(\X,\tau,\sfd)\)
for which \(f_n\to f\) in \(L^1(\mm)\) and \(\lip_a(f_n)\rightharpoonup|Df|_*\) weakly in \(L^1(\mm)\).
Clearly, we have that \(|Df|_*\leq\lip_a(f)\) \(\mm\)-a.e.
\item\label{it:cotg_mod} There exist an \emph{\(L^1(\mm)\)-normed \(L^1(\mm)\)-module} \(L^1(T^*{\bf X})\) (in the sense of
\cite[Definition 1.2.10]{Gigli14}), which we call the \emph{cotangent module} of \({\bf X}\), and a linear map
\(\d\colon\LIP_b(\X,\tau,\sfd)\to L^1(T^*{\bf X})\) such that \(|\d f|=|Df|_*\) for all \(f\in\LIP_b(\X,\tau,\sfd)\).
Also, \(\d(fg)=f\,\d g+g\,\d f\) for all \(f,g\in\LIP_b(\X,\tau,\sfd)\).
\item\label{it:adj_diff} We can regard \(\d\) as a densely-defined unbounded linear operator \(\d\colon L^1(\mm)\to L^1(T^*{\bf X})\),
whose adjoint we denote by \(\d^*\colon L^1(T^*{\bf X})^*\to L^\infty(\mm)\). Each \(v\in L^\infty(T{\bf X})\coloneqq L^1(T^*{\bf X})^*\)
in the domain of \(\d^*\) induces a derivation \(b_v\in{\rm Der}^\infty_\infty({\bf X})\) satisfying \(|b_v|\leq\|v\|_{L^\infty(T{\bf X})}\) and \(\div(b_v)=-\d^*v\).
\item\label{it:energy} We denote by \(\mathcal E\colon L^1(\mm)\to[0,+\infty]\) the `relaxed BV energy', which is given by \(\mathcal E(f)\coloneqq{\rm V}_*(f)\)
if \(f\in{\rm BV}_*({\bf X})\) and \(\mathcal E(f)\coloneqq+\infty\) otherwise. We also define the functional \(\tilde{\mathcal E}\colon L^1(\mm)\to[0,+\infty]\)
as \(\tilde{\mathcal E}(f)\coloneqq\int|Df|_*\,\d\mm\) if \(f\in\LIP_b(\X,\tau,\sfd)\) and \(\tilde{\mathcal E}(f)\coloneqq+\infty\) otherwise. Since
we have that \({\rm V}_*(f)\leq\int|Df|_*\,\d\mm\leq\int\lip_a(f)\,\d\mm\) holds for every \(f\in\LIP_b(\X,\tau,\sfd)\), we see that \(\mathcal E\) coincides with the
\(L^1(\mm)\)-lower semicontinuous envelope of \(\tilde{\mathcal E}\).
\item\label{it:conclusion} Now, fix \(f\in{\rm BV}({\bf X})\). Since \(\mathcal E\) is convex and \(L^1(\mm)\)-lower semicontinuous, it coincides
with the Fenchel biconjugate \(\tilde{\mathcal E}^{**}\) of \(\tilde{\mathcal E}\). Hence, by applying \eqref{it:adj_diff} and \cite[Theorem 5.1]{BBS} we obtain that
\[\begin{split}
\mathcal E(f)&=\tilde{\mathcal E}^{**}(f)\leq\sup\bigg\{-\int f\,\div(b)\,\d\mm\;\bigg|\;b\in{\rm Der}^\infty_\infty({\bf X}),\,\|b\|_{{\rm Der}^\infty({\bf X})}\leq 1\bigg\}\\
&\leq\sup\big\{|{\bf L}_f(b)(\X)|\;\big|\;b\in{\rm Der}^\infty_\infty({\bf X}),\,\|b\|_{{\rm Der}^\infty({\bf X})}\leq 1\big\}
\leq\|{\bf L}_f\|_{\mathcal L(\overline{\rm Der}^\infty_\infty({\bf X});\mathcal M(\X,\tau))}={\rm V}(f).
\end{split}\]
This gives that \(f\in{\rm BV}_*({\bf X})\) and \({\rm V}_*(f)=\mathcal E(f)\leq{\rm V}(f)\), which is the sought conclusion.
\end{enumerate}
For more details on \eqref{it:cotg_mod}, \eqref{it:adj_diff}, \eqref{it:conclusion}, see \cite[Theorem 3.9, Lemma 4.7, Theorem 4.8]{PasSod25}, respectively.
\end{proof}
\begin{lemma}\label{lem:compos_BV*}
Let \({\bf X}=(\X,\tau,\sfd,\mm)\) and \({\bf Y}=(\Y,\sigma,\rho,\mathfrak n)\) be e.m.t.m.s. Let \(\phi\colon\X\to\Y\)
be a given map such that \(\phi\colon(\X,\tau)\to(\Y,\sigma)\) is continuous, \(\phi\colon(\X,\sfd)\to(\Y,\rho)\) is
Lipschitz and \(\phi_\#\mm\leq C\mathfrak n\) for some \(C>0\). Then, letting \(\phi^*\) be as in Remark \ref{rmk:compos_bdd_compr},
we have \(\phi^*({\rm BV}_*({\bf Y}))\subseteq{\rm BV}_*({\bf X})\) and
\[
{\rm V}_*(\phi^*f)\leq C\Lip(\phi){\rm V}_*(f)\quad\text{ for every }f\in{\rm BV}_*({\bf Y}).
\]
\end{lemma}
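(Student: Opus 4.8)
The plan is to reduce the statement about $\phi^*$ on $\mathrm{BV}_*$-classes to the corresponding (almost trivial) statement for Lipschitz functions, and then pass to the limit using the definition of $\mathrm{V}_*$. First I would fix $f\in\mathrm{BV}_*({\bf Y})$ and pick, by Definition \ref{def:BV_rel}, a sequence $(f_n)_n\subseteq\LIP_b(\Y,\sigma,\rho)$ with $f_n\to f$ in $L^1(\mathfrak n)$ and $\int\lip_a(f_n)\,\d\mathfrak n\to\mathrm{V}_*(f)$ (such a recovery sequence exists by the very definition of the infimum, up to passing to a subsequence). The key observation is that $f_n\circ\phi\in\LIP_b(\X,\tau,\sfd)$: indeed $f_n\circ\phi$ is $\tau$-continuous because $\phi\colon(\X,\tau)\to(\Y,\sigma)$ is continuous and $f_n$ is $\sigma$-continuous, it is bounded because $f_n$ is, and it is $\sfd$-Lipschitz with $\Lip(f_n\circ\phi)\leq\Lip(\phi)\Lip(f_n)$ since $\phi\colon(\X,\sfd)\to(\Y,\rho)$ is Lipschitz. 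Moreover $f_n\circ\phi$ is exactly a representative of $\phi^*f_n$ in the sense of Remark \ref{rmk:compos_bdd_compr}.

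Next I would establish the pointwise bound $\lip_a(f_n\circ\phi)\leq\Lip(\phi)\,\lip_a(f_n)\circ\phi$ on $\X$. This is a local statement: for $x\in\X$ and any $\sigma$-open $V\ni\phi(x)$, the set $U\coloneqq\phi^{-1}(V)$ is a $\tau$-open neighbourhood of $x$, and $\Lip\big((f_n\circ\phi)|_U\big)\leq\Lip(\phi)\,\Lip(f_n|_V)$ because $\phi(U)\subseteq V$; taking the infimum over $V$ first and then noting that the neighbourhoods of the form $\phi^{-1}(V)$ are cofinal among those realizing the infimum defining $\lip_a(f_n\circ\phi)(x)$ (or simply that $\lip_a(f_n\circ\phi)(x)\leq\inf_V\Lip(\phi)\Lip(f_n|_V)=\Lip(\phi)\lip_a(f_n)(\phi(x))$) gives the claim. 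Integrating this bound against $\mm$ and using $\phi_\#\mm\leq C\mathfrak n$ yields
\[
\int\lip_a(f_n\circ\phi)\,\d\mm\leq\Lip(\phi)\int\lip_a(f_n)\circ\phi\,\d\mm=\Lip(\phi)\int\lip_a(f_n)\,\d(\phi_\#\mm)\leq C\Lip(\phi)\int\lip_a(f_n)\,\d\mathfrak n.
\]
In particular the right-hand side is bounded in $n$, so $(\lip_a(f_n\circ\phi))_n$ has uniformly bounded $L^1(\mm)$-norms.

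Finally, since $\phi^*$ is a bounded linear operator $L^1(\mathfrak n)\to L^1(\mm)$ by Remark \ref{rmk:compos_bdd_compr}, from $f_n\to f$ in $L^1(\mathfrak n)$ we get $f_n\circ\phi=\phi^*f_n\to\phi^*f$ in $L^1(\mm)$. Thus $(f_n\circ\phi)_n\subseteq\LIP_b(\X,\tau,\sfd)$ is an admissible approximating sequence for $\phi^*f$, which shows $\phi^*f\in\mathrm{BV}_*({\bf X})$, and by definition of $\mathrm{V}_*$ and the displayed estimate,
\[
\mathrm{V}_*(\phi^*f)\leq\varliminf_{n\to\infty}\int\lip_a(f_n\circ\phi)\,\d\mm\leq C\Lip(\phi)\lim_{n\to\infty}\int\lip_a(f_n)\,\d\mathfrak n=C\Lip(\phi)\mathrm{V}_*(f).
\]
I do not expect a genuine obstacle here; the only slightly delicate point is the pointwise inequality $\lip_a(f_n\circ\phi)\leq\Lip(\phi)\,\lip_a(f_n)\circ\phi$, whose proof must use that $\phi$ is continuous for $\tau$--$\sigma$ (so that preimages of $\sigma$-open sets are $\tau$-open neighbourhoods) in tandem with its Lipschitzianity for $\sfd$--$\rho$, i.e.\ the two separate regularity hypotheses on $\phi$ genuinely play distinct roles. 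Everything else is a routine combination of the change-of-variables inequality coming from $\phi_\#\mm\leq C\mathfrak n$ and lower semicontinuity of $\mathrm{V}_*$ along $L^1(\mm)$-convergent sequences.
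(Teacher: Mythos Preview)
Your proof is correct and follows essentially the same approach as the paper: choose a recovery sequence $(f_n)_n\subseteq\LIP_b(\Y,\sigma,\rho)$ for $f$, show $f_n\circ\phi\in\LIP_b(\X,\tau,\sfd)$ converges to $\phi^*f$ in $L^1(\mm)$, use the pointwise bound $\lip_a(f_n\circ\phi)\leq\Lip(\phi)\,\lip_a(f_n)\circ\phi$ together with $\phi_\#\mm\leq C\mathfrak n$ to control the energies, and pass to the $\varliminf$. The paper's version is slightly terser (it asserts the pointwise $\lip_a$ inequality without justification), while you spell out the roles of the $\tau$--$\sigma$ continuity and $\sfd$--$\rho$ Lipschitz hypotheses, but the argument is the same.
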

\begin{proof}
Fix any \(f\in{\rm BV}_*({\bf Y})\). Take a sequence \((f_n)_n\subseteq\LIP_b(\Y,\sigma,\rho)\) such that \(f_n\to f\)
in \(L^1(\mathfrak n)\) and \(\int\lip_a(f_n)\,\d\mathfrak n\to{\rm V}_*(f)\). Observe that
\(f_n\circ\phi\in\LIP_b(\X,\tau,\sfd)\) for every \(n\in\N\). Moreover, we have
\(\|f_n\circ\phi-\phi^*f\|_{L^1(\mm)}\leq C\|f_n-f\|_{L^1(\mathfrak n)}\to 0\) as \(n\to\infty\) and,
since \(\lip_a(f_n\circ\phi)\leq\Lip(\phi)\lip_a(f_n)\circ\phi\),
\[
\int\lip_a(f_n\circ\phi)\,\d\mm\leq\Lip(\phi)\int\lip_a(f_n)\circ\phi\,\d\mm\leq C\Lip(\phi)\int\lip_a(f_n)\,\d\mathfrak n
\quad\text{ for every }n\in\N.
\]
Letting \(n\to\infty\), we thus conclude that \(\phi^*f\in{\rm BV}_*({\bf X})\) and \({\rm V}_*(\phi^*f)\leq C\Lip(\phi){\rm V}_*(f)\).
\end{proof}
\begin{proposition}\label{prop:compos_BV}
Let \({\bf X}=(\X,\tau,\sfd,\mm)\) be an e.m.t.m.s.\ such that the topology \(\tau\) is metrisable on
each \(\tau\)-compact set. Then, letting \(\iota^*\) be as in Proposition \ref{prop:der_on_Gelfand_cpt}, we have \(\iota_*({\rm BV}({\bf X}))\subseteq{\rm BV}(\hat{\bf X})\) and
\[
{\bf L}_{\iota_*f}(\hat b)=\iota_\#({\bf L}_f(\iota^*\hat b))\in\mathcal M(\hat\X,\hat\tau)\quad
\text{ for every }f\in{\rm BV}({\bf X})\text{ and }\hat b\in{\rm Der}^\infty_\infty(\hat{\bf X}).
\]
Moreover, we have \({\rm V}(\iota_*f)={\rm V}(f)\) for every \(f\in{\rm BV}({\bf X})\).
\end{proposition}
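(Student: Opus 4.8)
The plan is to exploit the two isomorphisms $\iota_*\colon{\rm Der}^\infty_\infty({\bf X})\to{\rm Der}^\infty_\infty(\hat{\bf X})$ (Proposition~\ref{prop:der_on_Gelfand_cpt}) and $\iota_*\colon L^1(\mm)\to L^1(\hat\mm)$ (the properties of the Gelfand compactification listed above), and to transport the defining operator ${\bf L}_f$ along them. Concretely, fix $f\in{\rm BV}({\bf X})$ and set $\hat f\coloneqq\iota_*f\in L^1(\hat\mm)$. I would \emph{define} a candidate operator $\hat{\bf L}\colon{\rm Der}^\infty_\infty(\hat{\bf X})\to\mathcal M(\hat\X,\hat\tau)$ by $\hat{\bf L}(\hat b)\coloneqq\iota_\#\big({\bf L}_f(\iota^*\hat b)\big)$, where $\iota_\#$ denotes pushforward of signed Radon measures under the topological embedding $\iota\colon(\X,\tau)\hookrightarrow(\hat\X,\hat\tau)$; since $\iota$ is a homeomorphism onto its (dense) image, $\iota_\#$ is a linear isometry of $\mathcal M(\X,\tau)$ onto $\mathcal M(\iota(\X),\hat\tau|_{\iota(\X)})$, and composing with $\iota^*$, which is a linear isomorphism, shows $\hat{\bf L}$ is a bounded linear operator with $\|\hat{\bf L}\|\le\|{\bf L}_f\|\,\|\iota^*\|$. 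The goal is then to check that $\hat{\bf L}$ satisfies~\eqref{eq:def_BV_der_1} and~\eqref{eq:def_BV_der_2}, which will give $\hat f\in{\rm BV}(\hat{\bf X})$ with ${\bf L}_{\hat f}=\hat{\bf L}$ by uniqueness, and finally to prove the two norm estimates ${\rm V}(\iota_*f)\le{\rm V}(f)$ and ${\rm V}(f)\le{\rm V}(\iota_*f)$.

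For the integration-by-parts identity~\eqref{eq:def_BV_der_1}: given $\hat b\in{\rm Der}^\infty_\infty(\hat{\bf X})$, write $b\coloneqq\iota^*\hat b\in{\rm Der}^\infty_\infty({\bf X})$, so $\hat b=\iota_*b$ and $\div(\hat b)=\iota_*(\div(b))$ by Proposition~\ref{prop:der_on_Gelfand_cpt}. Then
\[
\hat{\bf L}(\hat b)(\hat\X)=\big(\iota_\#{\bf L}_f(b)\big)(\hat\X)={\bf L}_f(b)(\X)=-\int f\,\div(b)\,\d\mm
=-\int\iota_*f\,\iota_*(\div(b))\,\d\hat\mm=-\int\hat f\,\div(\hat b)\,\d\hat\mm,
\]
where the fourth equality uses that $\iota_*$ is the inverse of $\iota^*=(\,\cdot\,)\circ\iota$ together with $\hat\mm=\iota_\#\mm$ (so $\int g\circ\iota\,\d\mm=\int g\,\d\hat\mm$ for $\hat\mm$-integrable $g$, applied to a product of the representatives). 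For the locality/linearity property~\eqref{eq:def_BV_der_2}: take $\hat h\in\LIP_b(\hat\X,\hat\tau,\hat\sfd)$ and $\hat b$ as above, and put $h\coloneqq\iota^*\hat h\in\LIP_b(\X,\tau,\sfd)$. By Proposition~\ref{prop:der_on_Gelfand_cpt}, $\iota^*(\hat h\hat b)=h\,b$ (this is the identity $\iota_*(h b)=(\iota_*h)(\iota_*b)$ read through the inverse), so
\[
\hat{\bf L}(\hat h\hat b)=\iota_\#\big({\bf L}_f(h b)\big)=\iota_\#\big(h\,{\bf L}_f(b)\big)=(\hat h)\,\iota_\#\big({\bf L}_f(b)\big)=\hat h\,\hat{\bf L}(\hat b),
\]
using~\eqref{eq:def_BV_der_2} for ${\bf L}_f$ in the middle and the compatibility of pushforward with multiplication by a continuous function ($\iota_\#(h\mu)=(h\circ\iota^{-1})\,\iota_\#\mu$ on $\iota(\X)$, and $\hat h\circ\iota=h$ forces $\hat h=h\circ\iota^{-1}$ there) in the last step. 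This establishes $\iota_*f\in{\rm BV}(\hat{\bf X})$.

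The norm identity is essentially bookkeeping once the operators are identified. For ${\rm V}(\iota_*f)\le{\rm V}(f)$: for every $\hat b$ with $\|\hat b\|_{{\rm Der}^\infty(\hat{\bf X})}\le1$, Proposition~\ref{prop:der_on_Gelfand_cpt} gives $|\iota^*\hat b|=\iota^*|\hat b|$, and since $\iota^*\colon L^\infty(\hat\mm)\to L^\infty(\mm)$ is an isometry for the sup-norms (it is pre-composition with an a.e.-surjective map, or one uses that $\iota_*$ is its inverse and also norm-preserving), we get $\|\iota^*\hat b\|_{{\rm Der}^\infty({\bf X})}\le1$; then $\|\hat{\bf L}(\hat b)\|_{\rm TV}=\|{\bf L}_f(\iota^*\hat b)\|_{\rm TV}\le{\rm V}(f)$, so taking the supremum over $\hat b$ yields ${\rm V}(\iota_*f)\le{\rm V}(f)$. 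The reverse inequality is symmetric: every $b\in{\rm Der}^\infty_\infty({\bf X})$ with $\|b\|\le1$ is of the form $\iota^*\hat b$ with $\hat b=\iota_*b$, $\|\hat b\|\le1$, and $\|{\bf L}_f(b)\|_{\rm TV}=\|\iota_\#({\bf L}_f(b))\|_{\rm TV}=\|\hat{\bf L}(\hat b)\|_{\rm TV}\le{\rm V}(\iota_*f)$, whence ${\rm V}(f)\le{\rm V}(\iota_*f)$. The main obstacle I anticipate is purely measure-theoretic: verifying carefully that $\iota_\#\colon\mathcal M(\X,\tau)\to\mathcal M(\hat\X,\hat\tau)$ is a well-defined total-variation isometry whose image consists of measures concentrated on the Borel set $\iota(\X)$, and that it intertwines multiplication by $\hat h\in\LIP_b(\hat\X,\hat\tau,\hat\sfd)$ with multiplication by $\iota^*\hat h$ — this requires knowing that $\iota(\X)$ is $\hat\mm$-measurable (indeed $\hat\mm$-conull, since $\hat\mm=\iota_\#\mm$) and invoking Radon regularity to push signed measures forward along the embedding; everything else is formal manipulation with the already-established isomorphisms.
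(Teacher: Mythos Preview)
Your proposal is correct and follows essentially the same route as the paper: define the candidate operator $\hat{\bf L}(\hat b)\coloneqq\iota_\#({\bf L}_f(\iota^*\hat b))$, verify \eqref{eq:def_BV_der_1} and \eqref{eq:def_BV_der_2} via Proposition~\ref{prop:der_on_Gelfand_cpt}, and obtain the norm identity from the fact that $\iota^*$ bijects unit balls of derivations while $\iota_\#$ preserves total variation (by injectivity of $\iota$). The only cosmetic differences are that the paper checks \eqref{eq:def_BV_der_2} by integrating against test functions $\hat g\in\LIP_b(\hat\X,\hat\tau,\hat\sfd)$ and invoking density rather than via the pushforward--multiplication identity directly, and it packages ${\rm V}(\iota_*f)={\rm V}(f)$ as a single chain of equalities of suprema rather than two separate inequalities.
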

\begin{proof}
Fix any \(f\in{\rm BV}({\bf X})\). Given any \(\hat b\in{\rm Der}^\infty_\infty(\hat{\bf X})\), we define
\({\bf L}(\hat b)\coloneqq\iota_\#({\bf L}_f(\iota^*\hat b))\). Since \({\bf L}_f(\iota^*\hat b)\) is a finite signed
Radon measure on \((\X,\tau)\) and \(\iota\colon(\X,\tau)\to(\hat\X,\hat\tau)\) is continuous, we deduce that
\({\bf L}(\hat b)\) is a finite signed Radon measure on \((\hat\X,\hat\tau)\). The resulting operator
\({\bf L}\colon{\rm Der}^\infty_\infty(\hat{\bf X})\to\mathcal M(\hat\X,\hat\tau)\) is linear by construction.
Using Proposition \ref{prop:der_on_Gelfand_cpt} and the assumption \(f\in{\rm BV}({\bf X})\), we obtain that
\[
{\bf L}(\hat b)(\hat\X)={\bf L}_f(\iota^*\hat b)(\X)=-\int f\,\div(\iota^*\hat b)\,\d\mm
=-\int f\,\iota^*(\div(\hat b))\,\d\mm=-\int(\iota_*f)\,\div(\hat b)\,\d\hat\mm
\]
for every \(\hat b\in{\rm Der}^\infty_\infty(\hat{\bf X})\). Moreover, for any \(\hat b\in{\rm Der}^\infty_\infty(\hat{\bf X})\)
and \(\hat h,\hat g\in\LIP_b(\hat\X,\hat\tau,\hat\sfd)\) we can compute
\[
\int\hat g\,\d{\bf L}(\hat h\hat b)=\int\hat g\circ\iota\,\d{\bf L}_f(\iota^*(\hat h\hat b))
=\int\hat g\circ\iota\,\d{\bf L}_f((\iota^*\hat h)(\iota^*\hat b))=\int(\hat g\hat h)\circ\iota\,\d{\bf L}_f(\iota^*\hat b)
=\int\hat g\hat h\,\d{\bf L}(\hat b),
\]
so that \({\bf L}(\hat h\hat b)=\hat h\,{\bf L}(\hat b)\) as a consequence of \eqref{eq:LIP_dense_L1}. All in all, we have shown
that \(\iota_*f\in{\rm BV}(\hat{\bf X})\) and that \({\bf L}_{\iota_*f}(\hat b)={\bf L}(\hat b)=\iota_\#({\bf L}_f(\iota^*\hat b))\)
for every \(\hat b\in{\rm Der}^\infty_\infty(\hat{\bf X})\). In particular, we can compute
\[\begin{split}
{\rm V}(\iota_*f)&=\sup\big\{\|{\bf L}_{\iota_*f}(\hat b)\|_{\rm TV}\;\big|\;\hat b\in\Der^\infty_\infty(\hat{\bf X}),\,
\|\hat b\|_{{\rm Der}^\infty(\hat{\bf X})}\leq 1\big\}\\
&=\sup\big\{\|{\bf L}_f(\iota^*\hat b)\|_{\rm TV}\;\big|\;\hat b\in\Der^\infty_\infty(\hat{\bf X}),\,
\|\hat b\|_{{\rm Der}^\infty(\hat{\bf X})}\leq 1\big\}\\
&=\sup\big\{\|{\bf L}_f(b)\|_{\rm TV}\;\big|\;b\in\Der^\infty_\infty({\bf X}),\,\|b\|_{{\rm Der}^\infty({\bf X})}\leq 1\big\}
={\rm V}(f),
\end{split}\]
where we use the identity \(\iota_\#|{\bf L}_f(\iota^*\hat b)|=|\iota_\#({\bf L}_f(\iota^*\hat b))|\), which can be
proved by taking the injectivity of \(\iota\) into account. This concludes the proof of the statement.
\end{proof}

We now combine the above statements to prove the main result of this section:
\begin{corollary}\label{cor:equiv_BV_on_Gelfand_cpt}
Let \({\bf X}=(\X,\sfd,\mm)\) be a m.m.s.\ such that \((\X,\sfd)\) is either complete or Radon. Then we have
\(\iota_*({\rm BV}({\bf X}))={\rm BV}(\hat{\bf X})={\rm BV}_*(\hat{\bf X})\) and \({\rm V}(f)={\rm V}(\iota_*f)={\rm V}_*(\iota_*f)\)
for every \(f\in{\rm BV}({\bf X})\).
\end{corollary}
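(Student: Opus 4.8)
The plan is to establish the four-term chain of identifications sketched in the introduction,
\[
{\rm BV}(\hat{\bf X})\subseteq{\rm BV}_*(\hat{\bf X})\subseteq\iota_*\big({\rm BV}_*({\bf X})\big)=\iota_*\big({\rm BV}({\bf X})\big)\subseteq{\rm BV}(\hat{\bf X}),
\]
each inclusion being supplied by one of the results already at our disposal, and then to read off the equalities of total variations from the inequalities that accumulate along the way. Since \({\bf X}\) is a metric measure space, its topology is metrisable, hence a fortiori metrisable on each compact subset; so Proposition \ref{prop:compos_BV} applies and already gives the last inclusion \(\iota_*({\rm BV}({\bf X}))\subseteq{\rm BV}(\hat{\bf X})\), together with \({\rm V}(\iota_*f)={\rm V}(f)\) for every \(f\in{\rm BV}({\bf X})\).

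First I would apply Theorem \ref{thm:BV_in_BV*} to the e.m.t.m.s.\ \(\hat{\bf X}\), obtaining \({\rm BV}(\hat{\bf X})\subseteq{\rm BV}_*(\hat{\bf X})\) with \({\rm V}_*\leq{\rm V}\) on \(\hat{\bf X}\). Then I would invoke Lemma \ref{lem:compos_BV*} with \(\phi\coloneqq\iota\colon\X\to\hat\X\): the map \(\iota\colon(\X,\tau_\sfd)\to(\hat\X,\hat\tau)\) is continuous (a homeomorphism onto its image), it is \(1\)-Lipschitz from \((\X,\sfd)\) to \((\hat\X,\hat\sfd)\) because \(\hat\sfd(\iota(x),\iota(y))=\sfd(x,y)\), and \(\iota_\#\mm=\hat\mm\); so the hypotheses hold with \(C=\Lip(\iota)=1\), and the map \(\phi^*\) of Remark \ref{rmk:compos_bdd_compr} is exactly \(\iota^*\colon L^1(\hat\mm)\to L^1(\mm)\). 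This yields \(\iota^*({\rm BV}_*(\hat{\bf X}))\subseteq{\rm BV}_*({\bf X})\) with \({\rm V}_*(\iota^*\hat f)\leq{\rm V}_*(\hat f)\). Finally, Theorem \ref{thm:BV=BV*_mms}, available precisely because \((\X,\sfd)\) is complete or Radon, gives \({\rm BV}_*({\bf X})={\rm BV}({\bf X})\) and \({\rm V}_*={\rm V}\) on \({\bf X}\).

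Chaining these facts closes the loop. Starting from \(\hat f\in{\rm BV}(\hat{\bf X})\), one gets successively \(\hat f\in{\rm BV}_*(\hat{\bf X})\), then \(\iota^*\hat f\in{\rm BV}_*({\bf X})={\rm BV}({\bf X})\), and finally \(\hat f=\iota_*(\iota^*\hat f)\in\iota_*({\rm BV}({\bf X}))\); combined with \(\iota_*({\rm BV}({\bf X}))\subseteq{\rm BV}(\hat{\bf X})\) this gives \(\iota_*({\rm BV}({\bf X}))={\rm BV}(\hat{\bf X})\). Running the same computation from \(\hat f\in{\rm BV}_*(\hat{\bf X})\) shows \({\rm BV}_*(\hat{\bf X})\subseteq{\rm BV}(\hat{\bf X})\), whence \({\rm BV}_*(\hat{\bf X})={\rm BV}(\hat{\bf X})\) by Theorem \ref{thm:BV_in_BV*}. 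For the total variations I would fix \(f\in{\rm BV}({\bf X})\), set \(\hat f\coloneqq\iota_*f\) so that \(\iota^*\hat f=f\), and write
\[
{\rm V}(f)={\rm V}_*(f)={\rm V}_*(\iota^*\hat f)\leq{\rm V}_*(\hat f)\leq{\rm V}(\hat f)={\rm V}(\iota_*f)={\rm V}(f),
\]
forcing all terms to coincide, which in particular delivers \({\rm V}(f)={\rm V}(\iota_*f)={\rm V}_*(\iota_*f)\).

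The one step I expect to require care -- though it is not an analytic difficulty -- is the bookkeeping of the identifications: I must check that the operator \(\iota^*\) furnished by Lemma \ref{lem:compos_BV*} (precomposition with \(\iota\) at the level of \(L^1\)) coincides with the isomorphism \(\iota^*\colon L^1(\hat\mm)\to L^1(\mm)\) recorded among the properties of the Gelfand compactification, that its restriction to \(\LIP_b(\hat\X,\hat\tau,\hat\sfd)\) agrees with the isomorphism of \cite[Lemma 2.13]{PasTai25}, and that all of this is compatible with the operators \(\iota_*,\iota^*\) on \({\rm Der}^\infty_\infty\) from Proposition \ref{prop:der_on_Gelfand_cpt} which enter Proposition \ref{prop:compos_BV}. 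Since every one of these maps is literally ``compose with \(\iota\)'', they do agree, but this must be stated cleanly so that the two-sided inclusions above are honestly about the same maps; once this is in place, the proof is a direct concatenation of the cited results.
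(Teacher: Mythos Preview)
Your proposal is correct and follows essentially the same approach as the paper: the paper assembles exactly the same chain of inclusions \({\rm BV}(\hat{\bf X})\subseteq{\rm BV}_*(\hat{\bf X})\subseteq\iota_*({\rm BV}_*({\bf X}))=\iota_*({\rm BV}({\bf X}))\subseteq{\rm BV}(\hat{\bf X})\) and the same chain of (in)equalities of variations, citing the very same four results (Theorem \ref{thm:BV_in_BV*}, Lemma \ref{lem:compos_BV*}, Theorem \ref{thm:BV=BV*_mms}, Proposition \ref{prop:compos_BV}) for the respective steps. Your additional remarks on metrisability of \(\tau_\sfd\) and on the coherence of the several maps denoted \(\iota^*\) are accurate and make explicit what the paper leaves implicit.
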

\begin{proof}
Note that we have the following chains of inclusions and inequalities:
\[\begin{split}
{\rm BV}(\hat{\bf X})\overset{a)}{\subseteq}{\rm BV}_*(\hat{\bf X})\overset{b)}{\subseteq}\iota_*({\rm BV}_*({\bf X}))
\overset{c)}{=}\iota_*({\rm BV}({\bf X}))\overset{d)}{\subseteq}{\rm BV}(\hat{\bf X})&,\\
{\rm V}(\hat f)\overset{1)}{=}{\rm V}(\iota^*\hat f)\overset{2)}{=}{\rm V}_*(\iota^*\hat f)
\overset{3)}{\leq}{\rm V}_*(\hat f)\overset{4)}{\leq}{\rm V}(\hat f)&\quad\text{ for every }\hat f\in{\rm BV}(\hat{\bf X}).
\end{split}\]
Indeed, we have that: a) and 4) follow from Theorem \ref{thm:BV_in_BV*}; b) and 3) from Lemma \ref{lem:compos_BV*};
c) and 2) from Theorem \ref{thm:BV=BV*_mms}; d) and 1) from Proposition \ref{prop:compos_BV}. This gives the statement.
\end{proof}
\section{Predual of \texorpdfstring{\({\rm BV}_p({\bf X})\)}{BVp(X)} for \texorpdfstring{\(1<p<\infty\)}{1<p<infty}}

Let \({\bf X}=(\X,\tau,\sfd,\mm)\) be a locally-compact e.m.t.m.s. Observe that
\begin{equation}\label{eq:dual_Der_otimes_C}\begin{split}
\mathcal L\big(\overline{\rm Der}^\infty_\infty({\bf X});\mathcal M(\X,\tau)\big)
&\overset{\eqref{item:M(X)}}\approx_1\mathcal L\big(\overline{\rm Der}^\infty_\infty({\bf X});C_0(\X,\tau)^*\big)
\overset{\eqref{item:bdd_bilin}}\approx_1\mathcal B\big(\overline{\rm Der}^\infty_\infty({\bf X}),C_0(\X,\tau)\big)\\
&\overset{\eqref{item:proj_tensor_prod}}\approx_1\big(\overline{\rm Der}^\infty_\infty({\bf X})\hat\otimes_\pi C_0(\X,\tau)\big)^*.
\end{split}\end{equation}
We define the Banach space \(\mathbb W_q({\bf X})\) and its subspace \(\mathbb V_q({\bf X})\) as
\[\begin{split}
\mathbb W_q({\bf X})&\coloneqq L^q(\mm)\times_\infty\big(\overline{\rm Der}^\infty_\infty({\bf X})\hat\otimes_\pi C_0(\X,\tau)\big),\\
\mathbb V_q({\bf X})&\coloneqq\bigg\{\bigg(g,\sum_{i=1}^n b_i\otimes h_i\bigg)\in L^\infty(\mm)
\times({\rm Der}^\infty_\infty({\bf X})\otimes\LIP_b(\X,\tau,\sfd))\;\bigg|\;g=\div\bigg(\sum_{i=1}^n h_i b_i\bigg)\bigg\}.
\end{split}\]
\begin{remark}[Well-posedness of \(\mathbb V_q({\bf X})\)]\label{rmk:V_q_well-posed}{\rm
Let us check that the definition of \(\mathbb V_q({\bf X})\) is well posed.
Since \(\div\colon{\rm Der}^\infty_\infty({\bf X})\to L^\infty(\mm)\) is linear and
\({\rm Der}^\infty_\infty({\bf X})\times\LIP_b(\X,\tau,\sfd)\ni(b,h)\mapsto hb\in{\rm Der}^\infty_\infty({\bf X})\)
is bilinear, we have that
\({\rm Der}^\infty_\infty({\bf X})\times\LIP_b(\X,\tau,\sfd)\ni(b,h)\mapsto\delta(b,h)\coloneqq\div(hb)\in L^\infty(\mm)\)
is bilinear. Hence, the universal property of the tensor product of vector spaces
\(({\rm Der}^\infty_\infty({\bf X})\otimes\LIP_b(\X,\tau,\sfd),\otimes)\) ensures that there exists a unique
linear map \(\hat\div\colon{\rm Der}^\infty_\infty({\bf X})\otimes\LIP_b(\X,\tau,\sfd)\to L^\infty(\mm)\) such that
\[\begin{tikzcd}
{\rm Der}^\infty_\infty({\bf X})\times\LIP_b(\X,\tau,\sfd) \arrow[d,swap,"\otimes"]
\arrow[r,"\delta"] & L^\infty(\mm) \\
{\rm Der}^\infty_\infty({\bf X})\otimes\LIP_b(\X,\tau,\sfd) \arrow[ur,swap,"\hat\div"] &
\end{tikzcd}\]
is a commutative diagram. It follows that \(\div\big(\sum_{i=1}^n h_i b_i\big)=\hat\div(\alpha)\) is independent of the specific
way of writing \(\alpha=\sum_{i=1}^n b_i\otimes h_i\in{\rm Der}^\infty_\infty({\bf X})\otimes\LIP_b(\X,\tau,\sfd)\),
thus \(\mathbb V_q({\bf X})\) is well defined.
}\fr\end{remark}

The argument in Remark \ref{rmk:V_q_well-posed} shows also that \(\mathbb V_q({\bf X})\) is the zero set of the linear operator
\[
L^\infty(\mm)\times\big({\rm Der}^\infty_\infty({\bf X})\otimes\LIP_b(\X,\tau,\sfd)\big)\ni(g,\alpha)
\longmapsto g-\hat\div(\alpha)\in L^\infty(\mm),
\]
so that \(\mathbb V_q({\bf X})\) is a linear subspace of \(\mathbb W_q({\bf X})\) and its
closure \(\overline{\mathbb V}_q({\bf X})\) in \(\mathbb W_q({\bf X})\) is a Banach space.
\medskip

Now, observe that
\[\begin{split}
\mathbb W_q({\bf X})^*\overset{\eqref{item:ell_infty_product}}\approx_1
L^p(\mm)\times_1\big(\overline{\rm Der}^\infty_\infty({\bf X})\hat\otimes_\pi C_0(\X,\tau)\big)^*
\overset{\eqref{eq:dual_Der_otimes_C}}{\approx_1}L^p(\mm)\times_1\mathcal L\big(\overline{\rm Der}^\infty_\infty({\bf X});\mathcal M(\X,\tau)\big).
\end{split}\]
Keeping track of the various identifications, it can be readily checked that an isometric isomorphism of Banach spaces
\({\rm i}_{p,{\bf X}}\colon L^p(\mm)\times_1\mathcal L\big(\overline{\rm Der}^\infty_\infty({\bf X});\mathcal M(\X,\tau)\big)
\to\mathbb W_q({\bf X})^*\) is given by the unique bounded linear operator from
\(L^p(\mm)\times_1\mathcal L\big(\overline{\rm Der}^\infty_\infty({\bf X});\mathcal M(\X,\tau)\big)\)
to \(\mathbb W_q({\bf X})^*\) that verifies the identity
\begin{equation}\label{eq:main_dual_pairing}
\big\langle{\rm i}_{p,{\bf X}}(f,{\bf L}),(g,b\otimes h)\big\rangle=\int fg\,\d\mm+\int h\,\d{\bf L}(b)
\end{equation}
for every \((f,{\bf L},g,b,h)\in L^p(\mm)\times\mathcal L\big(\overline{\rm Der}^\infty_\infty({\bf X});\mathcal M(\X,\tau)\big)
\times L^q(\mm)\times{\rm Der}^\infty_\infty({\bf X})\times C_0(\X,\tau)\).
\begin{theorem}[Isometric predual of \({\rm BV}_p({\bf X})\)]\label{thm:predual_BV_p}
Let \({\bf X}=(\X,\tau,\sfd,\mm)\) be a locally-compact e.m.t.m.s. Let \(p,q\in(1,\infty)\) be
conjugate exponents. Let us define the operator \(\phi_{p,{\bf X}}\colon{\rm BV}_p({\bf X})\to\mathbb W_ q({\bf X})^*\) as
\[
\phi_{p,{\bf X}}(f)\coloneqq{\rm i}_{p,{\bf X}}(f,{\bf L}_f)\in\mathbb W_ q({\bf X})^*
\quad\text{ for every }f\in{\rm BV}_p({\bf X}).
\]
Then \(\phi_{p,{\bf X}}\) is a linear isometry and
\(\phi_{p,{\bf X}}({\rm BV}_p({\bf X}))=\overline{\mathbb V}_q({\bf X})^\perp_{\mathbb W_q({\bf X})}\).
In particular, we have
\[
{\rm BV}_p({\bf X})\approx_1(\mathbb W_q({\bf X})/\overline{\mathbb V}_q({\bf X}))^*.
\]
\end{theorem}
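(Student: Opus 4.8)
The plan is to prove the two claims contained in the statement --- that \(\phi_{p,{\bf X}}\) is a linear isometry, and that \(\phi_{p,{\bf X}}({\rm BV}_p({\bf X}))=\overline{\mathbb V}_q({\bf X})^\perp_{\mathbb W_q({\bf X})}\) --- and then to read the final isomorphism off from \eqref{item:annihilator}, bearing in mind that the chain \eqref{eq:dual_Der_otimes_C} together with \eqref{item:ell_infty_product} has already produced the isometric isomorphism \({\rm i}_{p,{\bf X}}\) of \(L^p(\mm)\times_1\mathcal L(\overline{\rm Der}^\infty_\infty({\bf X});\mathcal M(\X,\tau))\) onto \(\mathbb W_q({\bf X})^*\) characterised by \eqref{eq:main_dual_pairing}. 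The isometry part is quick: first I would check that \(f\mapsto{\bf L}_f\) is linear, since for \(f,g\in{\rm BV}_p({\bf X})\) and \(\lambda,\mu\in\R\) the operator \(\lambda{\bf L}_f+\mu{\bf L}_g\) satisfies \eqref{eq:def_BV_der_1} and \eqref{eq:def_BV_der_2} relative to \(\lambda f+\mu g\), hence coincides with \({\bf L}_{\lambda f+\mu g}\) by the uniqueness noted after Definition \ref{def:BV_der}; thus \(\phi_{p,{\bf X}}\), being the composition of the linear map \(f\mapsto(f,{\bf L}_f)\) with \({\rm i}_{p,{\bf X}}\), is linear, and
\[
\|\phi_{p,{\bf X}}(f)\|_{\mathbb W_q({\bf X})^*}=\|(f,{\bf L}_f)\|_1=\|f\|_{L^p(\mm)}+\|{\bf L}_f\|_{\mathcal L(\overline{\rm Der}^\infty_\infty({\bf X});\mathcal M(\X,\tau))}=\|f\|_{{\rm BV}_p({\bf X})}.
\]

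For the inclusion \(\phi_{p,{\bf X}}({\rm BV}_p({\bf X}))\subseteq\overline{\mathbb V}_q({\bf X})^\perp\), I would use that the annihilator of a set equals the annihilator of its closure, so that it suffices to show \(\phi_{p,{\bf X}}(f)\) vanishes on \(\mathbb V_q({\bf X})\) for every \(f\in{\rm BV}_p({\bf X})\). Given \(\big(g,\sum_{i=1}^n b_i\otimes h_i\big)\in\mathbb V_q({\bf X})\), so that \(g=\div\big(\sum_i h_i b_i\big)\), formula \eqref{eq:main_dual_pairing} and bilinearity give
\[
\Big\langle\phi_{p,{\bf X}}(f),\big(g,\textstyle\sum_i b_i\otimes h_i\big)\Big\rangle=\int fg\,\d\mm+\sum_{i=1}^n\int h_i\,\d{\bf L}_f(b_i);
\]
rewriting \(\int h_i\,\d{\bf L}_f(b_i)={\bf L}_f(h_i b_i)(\X)\) via \eqref{eq:def_BV_der_2}, then summing by linearity of \({\bf L}_f\) and invoking \eqref{eq:def_BV_der_1}, the second term equals \({\bf L}_f\big(\sum_i h_i b_i\big)(\X)=-\int f\,\div\big(\sum_i h_i b_i\big)\,\d\mm=-\int fg\,\d\mm\), whence the pairing is \(0\).

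The reverse inclusion is the crux of the proof. Let \(\omega\in\overline{\mathbb V}_q({\bf X})^\perp=\mathbb V_q({\bf X})^\perp\); since \({\rm i}_{p,{\bf X}}\) is onto, write \(\omega={\rm i}_{p,{\bf X}}(f,{\bf L})\) with \(f\in L^p(\mm)\) (hence \(f\in L^1(\mm)\), as \(\mm\) is finite) and \({\bf L}\in\mathcal L(\overline{\rm Der}^\infty_\infty({\bf X});\mathcal M(\X,\tau))\). I would show that \(f\in{\rm BV}({\bf X})\) with \({\bf L}_f={\bf L}|_{{\rm Der}^\infty_\infty({\bf X})}\): then \(f\in L^p(\mm)\cap{\rm BV}({\bf X})={\rm BV}_p({\bf X})\), and since \({\bf L}\) is the unique continuous extension of \({\bf L}_f\) to \(\overline{\rm Der}^\infty_\infty({\bf X})\) we get \(\omega={\rm i}_{p,{\bf X}}(f,{\bf L}_f)=\phi_{p,{\bf X}}(f)\). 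To verify the two defining properties, fix \(b\in{\rm Der}^\infty_\infty({\bf X})\) and \(h\in\LIP_b(\X,\tau,\sfd)\); the Leibniz rule gives \(\div(hb)=h\,\div(b)+b(h)\in L^\infty(\mm)\), so \((\div(hb),b\otimes h)\in\mathbb V_q({\bf X})\), and from \(\langle\omega,(\div(hb),b\otimes h)\rangle=0\) and \eqref{eq:main_dual_pairing} we obtain
\[
\int h\,\d{\bf L}(b)=-\int f\,\div(hb)\,\d\mm\qquad\text{for all }b\in{\rm Der}^\infty_\infty({\bf X}),\ h\in\LIP_b(\X,\tau,\sfd).
\]
Taking \(h\equiv 1\) yields \({\bf L}(b)(\X)=-\int f\,\div(b)\,\d\mm\), i.e.\ \eqref{eq:def_BV_der_1}; replacing \(b\) by \(hb\) and \(h\) by an arbitrary \(g\in\LIP_b(\X,\tau,\sfd)\) in the displayed identity gives \(\int g\,\d{\bf L}(hb)=-\int f\,\div(ghb)\,\d\mm=\int gh\,\d{\bf L}(b)=\int g\,\d\big(h\,{\bf L}(b)\big)\), and the density of \(\LIP_b(\X,\tau,\sfd)\) in \(L^1(|{\bf L}(hb)-h\,{\bf L}(b)|)\) --- exactly as in the uniqueness discussion after Definition \ref{def:BV_der} --- forces \({\bf L}(hb)=h\,{\bf L}(b)\), i.e.\ \eqref{eq:def_BV_der_2}. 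Combining everything, \(\phi_{p,{\bf X}}\) is an isometric isomorphism of \({\rm BV}_p({\bf X})\) onto the closed subspace \(\overline{\mathbb V}_q({\bf X})^\perp_{\mathbb W_q({\bf X})}\), which by \eqref{item:annihilator} is isometrically isomorphic to \((\mathbb W_q({\bf X})/\overline{\mathbb V}_q({\bf X}))^*\); this is the asserted identity. I expect the reverse inclusion to be the main obstacle, since one must extract \emph{both} structural identities of \({\bf L}_f\) from a single orthogonality relation, the key points being the recognition of the test elements \((\div(hb),b\otimes h)\) inside \(\mathbb V_q({\bf X})\) and the density of bounded Lipschitz functions in \(L^1\) of an arbitrary finite Radon measure on \((\X,\tau)\); the remaining work is routine bookkeeping with the identifications recalled in the preliminaries.
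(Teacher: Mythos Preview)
Your proposal is correct and follows essentially the same route as the paper: both show \(\phi_{p,{\bf X}}\) is a linear isometry via the factorisation through \({\rm i}_{p,{\bf X}}\), verify \(\phi_{p,{\bf X}}(f)\) annihilates \(\mathbb V_q({\bf X})\) by combining \eqref{eq:def_BV_der_1} and \eqref{eq:def_BV_der_2}, and for the reverse inclusion test the functional against suitable elements of \(\mathbb V_q({\bf X})\) to recover both defining properties of \({\bf L}_f\). The only cosmetic difference is that the paper uses the two test families \((\div(b),b\otimes\1_\X)\) and \((0,b\otimes(\psi h)-(hb)\otimes\psi)\) separately, whereas you derive everything from the single family \((\div(hb),b\otimes h)\) and then specialise; the content is the same.
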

\begin{proof}
Given that \({\rm BV}_p({\bf X})\ni f\mapsto(f,{\bf L}_f)\in L^p(\mm)\times_1
\mathcal L\big(\overline{\rm Der}^\infty_\infty({\bf X});\mathcal M(\X,\tau)\big)\) and \({\rm i}_{p,{\bf X}}\) are
linear isometries, \(\phi_{p,{\bf X}}\) is a linear isometry. For any \(f\in{\rm BV}_p({\bf X})\)
and \(\big(g,\sum_{i=1}^n b_i\otimes h_i\big)\in\mathbb V_q({\bf X})\), one has
\[\begin{split}
\bigg\langle\phi_{p,{\bf X}}(f),\bigg(g,\sum_{i=1}^n b_i\otimes h_i\bigg)\bigg\rangle&\overset{\eqref{eq:main_dual_pairing}}=
\int fg\,\d\mm+\sum_{i=1}^n\int h_i\,\d{\bf L}_f(b_i)\\
&\overset{\phantom{\eqref{eq:main_dual_pairing}}}=\int f\,\div\bigg(\sum_{i=1}^n h_i b_i\bigg)\,\d\mm+\sum_{i=1}^n{\bf L}_f(h_i b_i)(\X)=0,
\end{split}\]
whence it follows that \(\phi_{p,{\bf X}}(f)\in\overline{\mathbb V}_q({\bf X})^\perp_{\mathbb W_q({\bf X})}\), so that the inclusion
\(\phi_{p,{\bf X}}({\rm BV}_p({\bf X}))\subseteq\overline{\mathbb V}_q({\bf X})^\perp_{\mathbb W_q({\bf X})}\) is proved.
To show the converse inclusion, fix any element \((f,{\bf L})\in L^p(\mm)\times_1\mathcal L\big(\overline{\rm Der}^\infty_\infty({\bf X});\mathcal M(\X,\tau)\big)\)
such that \({\rm i}_{p,{\bf X}}(f,{\bf L})\in\overline{\mathbb V}_q({\bf X})^\perp_{\mathbb W_q({\bf X})}\). We claim that \(f\in{\rm BV}_p({\bf X})\)
and \({\bf L}_f={\bf L}\). Fix \(b\in{\rm Der}^\infty_\infty({\bf X})\). Since \((\div(b),b\otimes\1_\X)\in\mathbb V_q({\bf X})\), we have that
\[
\int f\,\div(b)\,\d\mm+{\bf L}(b)(\X)=\langle{\rm i}_{p,{\bf X}}(f,{\bf L}),(\div(b),b\otimes\1_\X)\rangle=0,
\]
which proves that the operator \({\bf L}\) verifies \eqref{eq:def_BV_der_1}. Moreover, note that for any \(h,\psi\in\LIP_b(\X,\tau,\sfd)\) we have that
\((0,b\otimes(\psi h)-(hb)\otimes\psi)\in\mathbb V_q({\bf X})\), thus accordingly
\[
\int\psi\,\d\big(h{\bf L}(b)-{\bf L}(hb)\big)=\int\psi h\,\d{\bf L}(b)-\int\psi\,\d{\bf L}(hb)=\langle{\rm i}_{p,{\bf X}}(f,{\bf L}),(0,b\otimes(\psi h)-(hb)\otimes\psi)\rangle=0.
\]
Since \(\LIP_b(\X,\tau,\sfd)\) is dense in \(L^1(|h{\bf L}(b)-{\bf L}(hb)|)\), we deduce that \(h{\bf L}(b)={\bf L}(hb)\)
holds for every \(h\in\LIP_b(\X,\tau,\sfd)\), thus proving that \({\bf L}\) verifies \eqref{eq:def_BV_der_2}. All in all, we have shown that
\(f\in{\rm BV}_p({\bf X})\) and \({\bf L}_f={\bf L}\), as we claimed. It follows that \({\rm i}_{p,{\bf X}}(f,{\bf L})=\phi_{p,{\bf X}}(f)\in\phi_{p,{\bf X}}({\rm BV}_p({\bf X}))\).
Hence, also the inclusion \(\overline{\mathbb V}_q({\bf X})^\perp_{\mathbb W_q({\bf X})}\subseteq\phi_{p,{\bf X}}({\rm BV}_p({\bf X}))\) is proved.
Finally, the last part of the statement follows from \eqref{item:annihilator} and the fact that \({\rm BV}_p({\bf X})\approx_1\phi_{p,{\bf X}}({\rm BV}_p({\bf X}))\)
(since \(\phi_{p,{\bf X}}\) is a linear isometry).
\end{proof}
\begin{corollary}\label{cor:predual_BV_mms}
Let \({\bf X}=(\X,\sfd,\mm)\) be a m.m.s.\ such that \((\X,\sfd)\) is either complete or Radon.
Let \(p,q\in(1,\infty)\) be conjugate exponents. Then \({\rm BV}_p({\bf X})\) has an isometric predual. More precisely,
\[
{\rm BV}_p({\bf X})\approx_1(\mathbb W_q(\hat{\bf X})/\overline{\mathbb V}_q(\hat{\bf X}))^*.
\]
\end{corollary}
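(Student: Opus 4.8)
The plan is to reduce everything to Theorem \ref{thm:predual_BV_p}, applied to the Gelfand compactification \(\hat{\bf X}=(\hat\X,\hat\tau,\hat\sfd,\hat\mm)\). Since \(\hat{\bf X}\) is an e.m.t.m.s.\ with \((\hat\X,\hat\tau)\) compact, it is in particular locally compact, so Theorem \ref{thm:predual_BV_p} directly gives
\[
{\rm BV}_p(\hat{\bf X})\approx_1(\mathbb W_q(\hat{\bf X})/\overline{\mathbb V}_q(\hat{\bf X}))^*.
\]
It therefore suffices to establish an isometric isomorphism \({\rm BV}_p({\bf X})\approx_1{\rm BV}_p(\hat{\bf X})\); the natural candidate is the restriction of the map \(\iota_*\colon L^1(\mm)\to L^1(\hat\mm)\).

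First I would observe that, since \(\hat\mm=\iota_\#\mm\) and \(\iota\) is injective, the change-of-variables formula yields \(\|\hat f\circ\iota\|_{L^p(\mm)}=\|\hat f\|_{L^p(\hat\mm)}\) in \([0,+\infty]\) for every \(\hat\mm\)-measurable function \(\hat f\) on \(\hat\X\) and every \(p\in[1,\infty)\). Combined with the fact, listed among the properties of the Gelfand compactification, that \(\iota^*\colon L^1(\hat\mm)\to L^1(\mm)\) is a Banach-space isomorphism with inverse \(\iota_*\), this shows that \(\iota^*\) restricts to an isometric isomorphism \(L^p(\hat\mm)\to L^p(\mm)\), with inverse \(\iota_*|_{L^p(\mm)}\); in particular \(f\in L^p(\mm)\) if and only if \(\iota_* f\in L^p(\hat\mm)\), and in that case \(\|\iota_* f\|_{L^p(\hat\mm)}=\|f\|_{L^p(\mm)}\).

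Next I would invoke Corollary \ref{cor:equiv_BV_on_Gelfand_cpt}, which applies precisely because \((\X,\sfd)\) is assumed complete or Radon: it asserts that \(\iota_*\) restricts to a linear bijection from \({\rm BV}({\bf X})\) onto \({\rm BV}(\hat{\bf X})\) with \({\rm V}(\iota_* f)={\rm V}(f)\) for every \(f\in{\rm BV}({\bf X})\). Intersecting this with the preceding paragraph, and recalling that \({\rm BV}_p({\bf X})=L^p(\mm)\cap{\rm BV}({\bf X})\) while \({\rm BV}_p(\hat{\bf X})=L^p(\hat\mm)\cap{\rm BV}(\hat{\bf X})\), I conclude that \(\iota_*\) restricts to a linear bijection \({\rm BV}_p({\bf X})\to{\rm BV}_p(\hat{\bf X})\) satisfying
\[
\|\iota_* f\|_{{\rm BV}_p(\hat{\bf X})}=\|\iota_* f\|_{L^p(\hat\mm)}+{\rm V}(\iota_* f)=\|f\|_{L^p(\mm)}+{\rm V}(f)=\|f\|_{{\rm BV}_p({\bf X})},
\]
that is, \({\rm BV}_p({\bf X})\approx_1{\rm BV}_p(\hat{\bf X})\). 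Composing with the isometric identification coming from Theorem \ref{thm:predual_BV_p} gives the claimed conclusion \({\rm BV}_p({\bf X})\approx_1(\mathbb W_q(\hat{\bf X})/\overline{\mathbb V}_q(\hat{\bf X}))^*\).

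I do not expect a genuine obstacle here: the two substantial ingredients, namely the invariance of the BV space under Gelfand compactification (Corollary \ref{cor:equiv_BV_on_Gelfand_cpt}) and the existence of an isometric predual in the locally compact case (Theorem \ref{thm:predual_BV_p}), are already in place, and the only point that requires a modicum of care is verifying that passing from \({\bf X}\) to \(\hat{\bf X}\) preserves \(L^p\)-integrability together with the \(L^p\)-norm — which is the elementary pushforward computation indicated above.
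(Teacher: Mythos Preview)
Your proposal is correct and follows exactly the paper's approach: the paper's proof is the single line ``\({\rm BV}_p({\bf X})\approx_1{\rm BV}_p(\hat{\bf X})\approx_1(\mathbb W_q(\hat{\bf X})/\overline{\mathbb V}_q(\hat{\bf X}))^*\) by Corollary \ref{cor:equiv_BV_on_Gelfand_cpt} and Theorem \ref{thm:predual_BV_p}'', and you have simply unpacked the first isometry (in particular the \(L^p\)-norm preservation under \(\iota_*\), which the paper leaves implicit since Corollary \ref{cor:equiv_BV_on_Gelfand_cpt} is stated only for \(p=1\)).
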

\begin{proof}
Note that \({\rm BV}_p({\bf X})\approx_1{\rm BV}_p(\hat{\bf X})\approx_1(\mathbb W_q(\hat{\bf X})/\overline{\mathbb V}_q(\hat{\bf X}))^*\)
by Corollary \ref{cor:equiv_BV_on_Gelfand_cpt} and Theorem \ref{thm:predual_BV_p}.
\end{proof}

In the next result, we characterise the weakly\(^*\) converging bounded nets in the space \({\rm BV}_p({\bf X})\).
\begin{proposition}
Let \({\bf X}=(\X,\tau,\sfd,\mm)\) be a locally-compact e.m.t.m.s. Let \(p,q\in(1,\infty)\) be conjugate
exponents. Fix a bounded net \((f_i)_{i\in I}\) in \({\rm BV}_p({\bf X})\) and a function \(f\in{\rm BV}_p({\bf X})\).
Then we have \(\lim_{i\in I}f_i=f\) in the weak\(^*\) topology of
\({\rm BV}_p({\bf X})\approx_1(\mathbb W_q({\bf X})/\overline{\mathbb V}_q({\bf X}))^*\) if and only if
\begin{align}
\label{eq:weak_star_BV_1}
\lim_{i\in I}f_i=f&\quad\text{ in the weak topology of }L^p(\mm),\\
\label{eq:weak_star_BV_2}
\lim_{i\in I}{\bf L}_{f_i}(b)={\bf L}_f(b)&\quad\text{ in the weak\(^*\) topology of }\mathcal M(\X,\tau),
\text{ for every }b\in{\rm Der}^\infty_\infty({\bf X}).
\end{align}
\end{proposition}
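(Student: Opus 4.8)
The plan is to unwind the identification \({\rm BV}_p({\bf X})\approx_1(\mathbb W_q({\bf X})/\overline{\mathbb V}_q({\bf X}))^*\) from Theorem \ref{thm:predual_BV_p} and reduce weak\(^*\) convergence in the quotient dual to weak\(^*\) convergence against a spanning set of test elements. Recall that, via \(\phi_{p,{\bf X}}\) and \eqref{item:annihilator}, the predual \(\mathbb W_q({\bf X})/\overline{\mathbb V}_q({\bf X})\) pairs with \({\rm BV}_p({\bf X})\) through the formula in \eqref{eq:main_dual_pairing}: for \(g\in L^q(\mm)\), \(b\in{\rm Der}^\infty_\infty({\bf X})\), \(h\in C_0(\X,\tau)\), the class of \((g,b\otimes h)\) acts on \(f\in{\rm BV}_p({\bf X})\) by \(\int fg\,\d\mm+\int h\,\d{\bf L}_f(b)\). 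Since a bounded net in a dual space converges weakly\(^*\) if and only if it converges when tested against the elements of a set whose linear span is dense in the predual, and since elementary tensors \((g,b\otimes h)\) have dense span in \(\mathbb W_q({\bf X})\) (by definition of the projective tensor product and of the \(\ell^\infty\)-product), it suffices to show: the bounded net \((f_i)\) satisfies \(\int f_i g\,\d\mm+\int h\,\d{\bf L}_{f_i}(b)\to\int fg\,\d\mm+\int h\,\d{\bf L}_f(b)\) for all such \(g,b,h\) if and only if \eqref{eq:weak_star_BV_1} and \eqref{eq:weak_star_BV_2} hold.

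For the implication from \eqref{eq:weak_star_BV_1}--\eqref{eq:weak_star_BV_2} to weak\(^*\) convergence, I would first note that \eqref{eq:weak_star_BV_1} gives \(\int f_i g\,\d\mm\to\int fg\,\d\mm\) for every \(g\in L^q(\mm)\), and \eqref{eq:weak_star_BV_2} gives \(\int h\,\d{\bf L}_{f_i}(b)\to\int h\,\d{\bf L}_f(b)\) for every \(b\in{\rm Der}^\infty_\infty({\bf X})\) and every \(h\in C_b(\X,\tau)\supseteq C_0(\X,\tau)\), since weak\(^*\) convergence in \(\mathcal M(\X,\tau)\) means exactly testing against bounded continuous functions. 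Adding the two limits and using bilinearity gives convergence against every \((g,b\otimes h)\); boundedness of the net then upgrades this to weak\(^*\) convergence against all of \(\mathbb W_q({\bf X})/\overline{\mathbb V}_q({\bf X})\) by the standard density argument.

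For the converse, assume \(f_i\to f\) weakly\(^*\) in the quotient dual. Testing against elements of the form \((g,0)\) with \(g\in L^q(\mm)\) — these belong to \(\mathbb W_q({\bf X})\) — immediately yields \eqref{eq:weak_star_BV_1}. Testing against \((0,b\otimes h)\) with \(b\in{\rm Der}^\infty_\infty({\bf X})\) fixed and \(h\in C_0(\X,\tau)\) arbitrary yields \(\int h\,\d{\bf L}_{f_i}(b)\to\int h\,\d{\bf L}_f(b)\) for all \(h\in C_0(\X,\tau)\). To conclude \eqref{eq:weak_star_BV_2}, I must pass from convergence against \(C_0(\X,\tau)\) to convergence against all of \(C_b(\X,\tau)\); the missing test functions differ by constants and by functions vanishing only "at infinity", so the key extra observation is that the constant function \(\1_\X\) is controlled: since \((f_i)\) is bounded in \({\rm BV}_p({\bf X})\), the masses \(|{\bf L}_{f_i}(b)|(\X)=\|{\bf L}_{f_i}(b)\|_{\rm TV}\) are uniformly bounded, so the net \(({\bf L}_{f_i}(b))_i\) is norm-bounded in \(\mathcal M(\X,\tau)\); combined with convergence against the dense-enough family \(C_0(\X,\tau)\), and the fact that in a completely regular space convergence against \(C_0\) plus mass bounds plus convergence of total masses (which follows by testing against \((\div(b), b\otimes\1)\in\mathbb V_q\), giving \({\bf L}_{f_i}(b)(\X)=-\int f_i\,\div(b)\,\d\mm\to-\int f\,\div(b)\,\d\mm={\bf L}_f(b)(\X)\)) forces weak\(^*\) convergence in \(\mathcal M(\X,\tau)\).

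The main obstacle is precisely this last point: \(C_0(\X,\tau)\) is generally a proper closed subspace of \(C_b(\X,\tau)\), so weak\(^*\) convergence in \(C_0(\X,\tau)^*\) is a priori weaker than weak\(^*\) convergence in \(\mathcal M(\X,\tau)\) (i.e.\ convergence tested against \(C_b(\X,\tau)\)). The resolution is to exploit that \(\mathbb V_q({\bf X})\) contains the element \((\div(b),b\otimes\1_\X)\), which when paired with \(\phi_{p,{\bf X}}(f_i)\) is forced to vanish; this "uses up" the constant direction and, together with the uniform mass bound from boundedness of the net and the locally-compact structure, lets one bootstrap from \(C_0\) to \(C_b\). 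I would present this bootstrap carefully — writing an arbitrary \(h\in C_b(\X,\tau)\) as \(h(x_0)\1_\X + (h - h(x_0)\1_\X)\) near points where the behaviour at infinity matters, and more efficiently just invoking that a norm-bounded net of measures converging weakly\(^*\) as functionals on \(C_0\) and converging in total mass converges weakly\(^*\) on \(C_b\) — and everything else in the proof is routine bilinearity and the density-of-span criterion for weak\(^*\) convergence of bounded nets.
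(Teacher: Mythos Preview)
Your overall architecture is exactly the paper's: use the duality pairing \eqref{eq:main_dual_pairing}, test against \((g,0)\) and \((0,b\otimes h)\) separately, and for the converse direction combine convergence on elementary tensors with boundedness and density. The one substantive issue is a misreading of what ``weak\(^*\) topology of \(\mathcal M(\X,\tau)\)'' means in this paper. By item \eqref{item:M(X)}, \(\mathcal M(\X,\tau)\) is identified with \(C_0(\X,\tau)^*\), so its weak\(^*\) topology is the one induced by \(C_0(\X,\tau)\), \emph{not} by \(C_b(\X,\tau)\). Consequently \eqref{eq:weak_star_BV_2} already means precisely \(\int h\,\d{\bf L}_{f_i}(b)\to\int h\,\d{\bf L}_f(b)\) for all \(h\in C_0(\X,\tau)\), and the ``bootstrap from \(C_0\) to \(C_b\)'' you worry about is simply not needed. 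The paper's proof of the forward implication stops as soon as convergence against \(h\in C_0(\X,\tau)\) is obtained.

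It is worth noting that your proposed bootstrap is also false as stated: a norm-bounded net of signed measures converging against \(C_0\) and with converging total masses need not converge against \(C_b\). On \(\R\) take \(\mu_n\coloneqq\delta_n-\delta_{-n}\); then \(\mu_n\to 0\) against every \(h\in C_0(\R)\), \(\mu_n(\R)=0\) for all \(n\), and \(\|\mu_n\|_{\rm TV}=2\), yet \(\int\arctan\,\d\mu_n\to\pi\neq 0\). So had the statement actually required convergence against \(C_b\), your argument would have a genuine gap. Once you correct the interpretation of the weak\(^*\) topology on \(\mathcal M(\X,\tau)\), your proof coincides with the paper's.
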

\begin{proof}
Assume \(\lim_{i\in I}f_i=f\) weakly\(^*\) in \({\rm BV}_p({\bf X})\). Taking \eqref{eq:main_dual_pairing} into account,
we deduce that
\[\begin{split}
&\lim_{i\in I}\int f_i g\,\d\mm=\lim_{i\in I}\langle\phi_{p,{\bf X}}(f_i),(g,0)\rangle=\langle\phi_{p,{\bf X}}(f),(g,0)\rangle
=\int fg\,\d\mm,\\
&\lim_{i\in I}\int h\,\d{\bf L}_{f_i}(b)=\lim_{i\in I}\langle\phi_{p,{\bf X}}(f_i),(0,b\otimes h)\rangle
=\langle\phi_{p,{\bf X}}(f),(0,b\otimes h)\rangle=\int h\,\d{\bf L}_f(b)
\end{split}\]
for every \(g\in L^q(\mm)\), \(b\in{\rm Der}^\infty_\infty({\bf X})\) and \(h\in C_0(\X,\tau)\), thus proving
\eqref{eq:weak_star_BV_1} and \eqref{eq:weak_star_BV_2}, respectively.

Conversely, assume \eqref{eq:weak_star_BV_1} and \eqref{eq:weak_star_BV_2} hold. Fix \(\big(g,\sum_{j=1}^n b_j\otimes h_j\big)
\in L^q(\mm)\times({\rm Der}^\infty_\infty({\bf X})\otimes C_0(\X,\tau))\). Then we can compute
\begin{equation}\label{eq:equiv_weak_star_BV}\begin{split}
\lim_{i\in I}\bigg\langle\phi_{p,{\bf X}}(f_i),\bigg(g,\sum_{j=1}^n b_j\otimes h_j\bigg)\bigg\rangle
&=\lim_{i\in I}\bigg(\int f_i g\,\d\mm+\sum_{j=1}^n\int h_j\,\d{\bf L}_{f_i}(b_j)\bigg)\\
&=\int fg\,\d\mm+\sum_{j=1}^n\int h_j\,\d{\bf L}_f(b_j)\\
&=\bigg\langle\phi_{p,{\bf X}}(f),\bigg(g,\sum_{j=1}^n b_j\otimes h_j\bigg)\bigg\rangle.
\end{split}\end{equation}
Now, fix any \((g,\alpha)\in\mathbb W_q({\bf X})\) and \(\varepsilon>0\). Since \({\rm Der}^\infty_\infty({\bf X})\otimes C_0(\X,\tau)\)
is dense in \(\overline{\rm Der}^\infty_\infty({\bf X})\hat\otimes_\pi C_0(\X,\tau)\), we can find a tensor
\(\beta=\sum_{j=1}^n b_j\otimes h_j\in{\rm Der}^\infty_\infty({\bf X})\otimes C_0(\X,\tau)\) such that
\(\|(g,\alpha)-(g,\beta)\|_{\mathbb W_q({\bf X})}\leq\varepsilon\). Therefore, letting
\(M\coloneqq\|f\|_{{\rm BV}_p({\bf X})}+\sup_{i\in I}\|f_i\|_{{\rm BV}_p({\bf X})}<+\infty\), we deduce from \eqref{eq:equiv_weak_star_BV} that
\[
\varlimsup_{i\in I}\big|\langle\phi_{p,{\bf X}}(f_i),(g,\alpha)\rangle-\langle\phi_{p,{\bf X}}(f),(g,\alpha)\rangle\big|
\leq M\varepsilon+\lim_{i\in I}\big|\langle\phi_{p,{\bf X}}(f_i),(g,\beta)\rangle-\langle\phi_{p,{\bf X}}(f),(g,\beta)\rangle\big|
=M\varepsilon,
\]
so that \(\lim_{i\in I}\langle\phi_{p,{\bf X}}(f_i),(g,\alpha)\rangle=\langle\phi_{p,{\bf X}}(f),(g,\alpha)\rangle\)
by the arbitrariness of \(\varepsilon>0\). This proves that \(\lim_{i\in I}f_i=f\) in the weak\(^*\) topology
of \({\rm BV}_p({\bf X})\).
\end{proof}
\section{Predual of \texorpdfstring{\({\rm BV}({\bf X})\)}{BV(X)} on bounded PI spaces}\label{s:predual_BV_PI}
In general, Theorem \ref{thm:predual_BV_p} (and Corollary \ref{cor:predual_BV_mms}) cannot be generalized to the limit case \(p=1\).
Indeed, in the following example we construct a metric measure space \({\bf X}\) whose BV space \({\rm BV}({\bf X})\) does not have
an isometric predual (and, in fact, not even a predual).
\begin{example}\label{ex:no_predual}{\rm
Let us equip \([0,1]\) with the snowflake distance \(\sfd(x,y)\coloneqq\sqrt{|x-y|}\) and the Lebesgue measure
\(\mm\coloneqq\mathscr L^1|_{[0,1]}\). Note that \({\bf X}=([0,1],\sfd,\mm)\) is a complete m.m.s. We claim that
\[
{\rm BV}({\bf X})=L^1(0,1),\qquad{\rm V}(f)=0\quad\text{ for every }f\in{\rm BV}({\bf X}).
\]
To prove it, fix any function \(f\in L^1(0,1)\). Take a sequence \((f_n)_n\subseteq\LIP([0,1],\sfd_{\rm Eu})\) such that
\(f_n\to f\) in \(L^1(0,1)\), where \(\sfd_{\rm Eu}\) denotes the Euclidean distance \(\sfd_{\rm Eu}(x,y)\coloneqq|x-y|\).
Since \(\sfd_{\rm Eu}\leq\sfd\), we have \((f_n)_n\subseteq\LIP([0,1],\sfd)\). Note also that for every \(n\in\N\) and \(x\in[0,1]\) we have
\[\begin{split}
\lip_a^\sfd(f_n)(x)&=\lim_{r\searrow 0}\sup\bigg\{\frac{|f_n(y)-f_n(z)|}{\sfd(y,z)}\;\bigg|\;y,z\in[0,1],\,y\neq z,\,\sfd(x,y),\sfd(x,z)<r\bigg\}\\
&=\lim_{r\searrow 0}\sup\bigg\{\sfd(y,z)\frac{|f_n(y)-f_n(z)|}{\sfd_{\rm Eu}(y,z)}\;\bigg|\;y,z\in[0,1],\,y\neq z,\,\sfd(x,y),\sfd(x,z)<r\bigg\}\\
&\leq\Lip^{\sfd_{\rm Eu}}(f_n)\lim_{r\searrow 0}\sup\big\{\sfd(y,z)\;\big|\;y,z\in[0,1],\,y\neq z,\,\sfd(x,y),\sfd(x,z)<r\big\}=0,
\end{split}\]
thus \(f\in{\rm BV}_*({\bf X})\) and \({\rm V}_*(f)\leq\lim_n\int\lip_a^\sfd(f_n)\,\d\mm=0\). Recalling Theorem \ref{thm:BV=BV*_mms}, the claim is proved.
Therefore, the space \({\rm BV}({\bf X})\approx_1 L^1(0,1)\) does not have a predual by \cite[Theorem 6.3.7]{Albiac-Kalton}. 
\fr}\end{example}

A characteristic feature of the example we presented in Example \ref{ex:no_predual} is that the total variation of a BV function
does not control the behaviour of the function itself. A distinguished class of metric measure spaces where this phenomenon does not occur
is that of the so-called \emph{PI spaces}, whose definition we report below. We refer to \cite{Hei:Kos:98,hajlasz2000sobolev,Bj:Bj:11,HKST:15}
for a detailed account of this topic.
\begin{definition}[PI space]\label{def:PI_space}
A complete m.m.s.\ \({\bf X}=(\X,\sfd,\mm)\) is said to be a \textbf{PI space} provided:
\begin{enumerate}[(i)]
\item \textsc{Doubling condition.} There exists a constant \(C_D({\bf X})>0\) such that
\[
0<\mm(B_{2r}(x))\leq C_D({\bf X})\mm(B_r(x))<+\infty\quad\text{ for every }x\in\X\text{ and }r>0.
\]
\item \textsc{Weak \((1,1)\)-Poincar\'{e} inequality.} There exist constants \(C_P({\bf X})>0\) and \(\lambda({\bf X})\geq 1\) such that for every
Lipschitz function \(f\colon\X\to\R\) we have
\[
\fint_{B_r(x)}\bigg|f-\fint_{B_r(x)}f\,\d\mm\bigg|\,\d\mm\leq C_P({\bf X})r\fint_{B_{\lambda({\bf X})r}(x)}\lip_a(f)\,\d\mm\quad\text{ for every }x\in\X\text{ and }r>0.
\]
\end{enumerate}
\end{definition}

Our next goal is to show that, on a bounded PI space, \({\rm BV}({\bf X})={\rm BV}_p({\bf X})\) with equivalent norms for all \(p>1\)
sufficiently large, whence it follows (taking Corollary \ref{cor:predual_BV_mms} into account) that in this case the space \({\rm BV}({\bf X})\)
has a predual; see Theorem \ref{thm:predual_BV_PI}. To achieve this goal, we first need to discuss some preliminary notions and results.
\medskip

A Borel subset \(E\subseteq\X\) of a m.m.s.\ \({\bf X}=(\X,\sfd,\mm)\) is said to be a \textbf{set of finite perimeter} provided its characteristic function
\(\1_E\) belongs to \({\rm BV}({\bf X})\). The \textbf{perimeter} of \(E\) is defined as
\[
{\rm P}(E)\coloneqq{\rm V}(\1_E).
\]
The following result, which correlates the total variation of a BV function with the perimeters of its superlevel sets, is valid on
(complete separable) m.m.s.\ and was proved in \cite[Proposition 4.2]{Mir:03}.
\begin{theorem}[Coarea formula]\label{thm:coarea}
Let \({\bf X}=(\X,\sfd,\mm)\) be a m.m.s.\ such that \((\X,\sfd)\) is complete and separable. Let \(f\in{\rm BV}({\bf X})\) be given.
Then \(\{f>t\}\) is a set of finite perimeter for a.e.\ \(t\in\R\), the function \(\R\ni t\mapsto{\rm P}(\{f>t\})\) is Lebesgue measurable, and we have
\[
{\rm V}(f)=\int_\R{\rm P}(\{f>t\})\,\d t.
\]
\end{theorem}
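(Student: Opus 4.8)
The plan is to prove the coarea formula \({\rm V}(f)=\int_\R{\rm P}(\{f>t\})\,\d t\) by establishing the two inequalities separately, exploiting the fact that on a complete separable m.m.s.\ we have the identification \({\rm BV}({\bf X})={\rm BV}_*({\bf X})\) and \({\rm V}={\rm V}_*\) furnished by Theorem \ref{thm:BV=BV*_mms}. This reduces everything to working with the relaxed total variation \({\rm V}_*\), which is defined through Lipschitz approximations, and so the proof can be carried out entirely at the level of the functional \(f\mapsto\int\lip_a(f_n)\,\d\mm\) along approximating sequences \((f_n)_n\subseteq\LIP_b(\X,\sfd)\).

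For the inequality \(\int_\R{\rm P}(\{f>t\})\,\d t\leq{\rm V}(f)\), I would first record the classical pointwise coarea inequality for Lipschitz (indeed merely absolutely continuous along curves) functions: for any \(g\in\LIP_b(\X,\sfd)\) one has \(\int_\R\Big(\int\lip_a(\1_{\{g>t\}})\,\d\mm\Big)\,\d t\leq\int\lip_a(g)\,\d\mm\); this follows because \(\lip_a(g)\) dominates the relaxed slope and the one-dimensional coarea formula lets one integrate out the level parameter. Then, given \(f\in{\rm BV}({\bf X})={\rm BV}_*({\bf X})\), pick \((f_n)_n\subseteq\LIP_b(\X,\sfd)\) with \(f_n\to f\) in \(L^1(\mm)\) and \(\int\lip_a(f_n)\,\d\mm\to{\rm V}_*(f)={\rm V}(f)\). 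Up to a subsequence \(f_n\to f\) \(\mm\)-a.e., hence \(\1_{\{f_n>t\}}\to\1_{\{f>t\}}\) in \(L^1(\mm)\) for a.e.\ \(t\in\R\) (by dominated convergence, using that the set of \(t\) that are atoms of the pushforward of \(\mm\) under \(f\) is countable). For such \(t\), lower semicontinuity of \({\rm V}_*\) under \(L^1(\mm)\)-convergence gives \({\rm P}(\{f>t\})={\rm V}_*(\1_{\{f>t\}})\leq\varliminf_n\int\lip_a(\1_{\{f_n>t\}})\,\d\mm\); integrating in \(t\) and applying Fatou together with the pointwise coarea inequality for each \(f_n\) yields \(\int_\R{\rm P}(\{f>t\})\,\d t\leq\varliminf_n\int_\R\int\lip_a(\1_{\{f_n>t\}})\,\d\mm\,\d t\leq\varliminf_n\int\lip_a(f_n)\,\d\mm={\rm V}(f)\). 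This also shows \(\{f>t\}\) has finite perimeter for a.e.\ \(t\) and that \(t\mapsto{\rm P}(\{f>t\})\) is measurable (as an a.e.\ limit of measurable functions, or via lower semicontinuity arguments).

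For the reverse inequality \({\rm V}(f)\leq\int_\R{\rm P}(\{f>t\})\,\d t\), I would use the layer-cake representation \(f=\int_0^\infty\1_{\{f>t\}}\,\d t-\int_{-\infty}^0\1_{\{f\leq t\}}\,\d t\) together with the subadditivity and \(1\)-homogeneity of \({\rm V}\). The clean way is: approximate \(f\) by simple functions of the form \(f_k=\sum_i c_{i,k}\1_{\{f>t_{i,k}\}}\) (a telescoping sum over a partition of the range), so that \(f_k\to f\) in \(L^1(\mm)\) with \({\rm V}(f_k)\le\sum_i|c_{i,k}|\,{\rm P}(\{f>t_{i,k}\})\), a Riemann-type sum for \(\int_\R{\rm P}(\{f>t\})\,\d t\); then pass to the limit using lower semicontinuity \({\rm V}(f)\leq\varliminf_k{\rm V}(f_k)\). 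One must choose the partition points among the a.e.-full set of \(t\) with \({\rm P}(\{f>t\})<\infty\) and arrange that the Riemann sums converge to the integral, which is where a little care with the (possibly non-Riemann-integrable but Lebesgue-integrable) function \(t\mapsto{\rm P}(\{f>t\})\) is needed — one can instead bound directly \({\rm V}(f_k)\le\int_\R{\rm P}(\{f>t\})\,\d t\) for every \(k\) by grouping, since perimeter is insensitive to which representative superlevel set one picks within a partition interval only up to the monotonicity \(\{f>s\}\supseteq\{f>t\}\) for \(s<t\), so a more robust route is an explicit estimate of \(\|f-f_k\|_{L^1}\) and \({\rm V}(f_k)\) in terms of the partition mesh.

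The main obstacle I anticipate is the reverse inequality: making the layer-cake/simple-function approximation rigorous requires controlling \({\rm V}(f_k)\) by \(\int_\R{\rm P}(\{f>t\})\,\d t\) uniformly, which is delicate because \(t\mapsto{\rm P}(\{f>t\})\) is only Lebesgue measurable and integrable, not continuous, so naive Riemann sums need not converge. The standard fix is to exploit monotonicity of the level sets in \(t\) to get, for any finite partition, \(\sum_i(t_{i+1}-t_i)\,{\rm P}(\{f>\tau_i\})\) bounded by \(\int{\rm P}(\{f>t\})\,\d t\) only after choosing \(\tau_i\) as essential infima of \({\rm P}\) over \([t_i,t_{i+1}]\), or alternatively to invoke a density/mollification argument reducing to \(f\in\LIP_b(\X,\sfd)\) where the pointwise coarea identity (not just inequality) is available. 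Since the statement is attributed to \cite[Proposition 4.2]{Mir:03} and the excerpt grants Theorem \ref{thm:BV=BV*_mms}, the cleanest writeup simply cites that both inequalities follow from the corresponding known results for the relaxed/Miranda total variation, with the identification \({\rm V}={\rm V}_*\) bridging the two notions.
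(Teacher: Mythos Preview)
The paper does not prove this theorem: it is stated and then attributed verbatim to \cite[Proposition 4.2]{Mir:03}, with no argument given. Your final sentence---that the cleanest route is simply to invoke Miranda's result together with the identification \({\rm V}={\rm V}_*\) from Theorem~\ref{thm:BV=BV*_mms}---is therefore exactly what the paper does, and the rest of your proposal goes well beyond it.

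As for the sketch itself, the overall strategy (two inequalities, Lipschitz approximation plus Fatou for one direction, layer-cake plus lower semicontinuity for the other) is indeed Miranda's, but one point is written incorrectly: the expression \(\int\lip_a(\1_{\{g>t\}})\,\d\mm\) is ill-posed, since \(\1_{\{g>t\}}\) is not Lipschitz and \(\lip_a\) is only defined for functions in \(\LIP_b(\X,\sfd)\). What you need there is \({\rm V}_*(\1_{\{g>t\}})\) (i.e.\ the perimeter), and the ``pointwise coarea inequality'' for a Lipschitz \(g\) should read \(\int_\R{\rm P}(\{g>t\})\,\d t\leq\int\lip_a(g)\,\d\mm\); this is itself a nontrivial lemma in the metric setting (it is where Miranda uses curve-based arguments and the upper-gradient structure, not a one-line consequence of the Euclidean coarea formula as your parenthetical suggests). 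With that correction the first half of your argument goes through. Your discussion of the reverse inequality correctly identifies the genuine difficulty and the standard workaround.
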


Another ingredient we need is the following isoperimetric inequality, which holds on PI spaces:
\begin{theorem}[Global isoperimetric inequality on bounded PI spaces]\label{thm:glob_isoper_ineq}
Let \({\bf X}=(\X,\sfd,\mm)\) be a bounded PI space. Fix any \(\alpha\in(1,+\infty)\) with \(\alpha\geq\log_2(C_D({\bf X}))\).
Then there exists a constant \(C_I({\bf X},\alpha)>0\), depending only on \(C_D({\bf X})\), \(C_P({\bf X})\),
\(\lambda({\bf X})\), \({\rm diam}(\X)\), \(\mm(\X)\) and \(\alpha\), such that
\begin{equation}\label{eq:glob_isoper_ineq}
\min\{\mm(E),\mm(\X\setminus E)\}\leq C_I({\bf X},\alpha){\rm P}(E)^{\frac{\alpha}{\alpha-1}}
\quad\text{ for every set of finite perimeter }E\subseteq\X.
\end{equation}
\end{theorem}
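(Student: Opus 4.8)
The plan is to derive the global isoperimetric inequality \eqref{eq:glob_isoper_ineq} from the more standard \emph{local} Poincar\'{e}-type isoperimetric inequality on balls, which is a well-known consequence of the weak $(1,1)$-Poincar\'{e} inequality together with the doubling condition; see \cite{HKST:15}. Concretely, on a PI space there is a constant $C>0$ and a radius $\lambda({\bf X})\geq 1$ such that for every ball $B_r(x)$ with $r\leq{\rm diam}(\X)$ and every set of finite perimeter $E$ one has $\min\{\mm(E\cap B_r(x)),\mm(B_r(x)\setminus E)\}\leq C\,r\,{\rm P}(E,B_{\lambda({\bf X})r}(x))$, where ${\rm P}(E,\cdot)$ is the perimeter measure. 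Since $\X$ is bounded, by taking $r={\rm diam}(\X)$ and $x$ arbitrary we already get $B_r(x)=\X=B_{\lambda({\bf X})r}(x)$, whence $\min\{\mm(E),\mm(\X\setminus E)\}\leq C\,{\rm diam}(\X)\,{\rm P}(E)$. This is a linear isoperimetric inequality; the point of the statement is to upgrade the exponent $1$ on ${\rm P}(E)$ to $\frac{\alpha}{\alpha-1}>1$, which is the genuinely nontrivial improvement.

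To obtain the better exponent I would run a Maz'ya-type iteration / truncation argument, exploiting the doubling dimension. The doubling condition with constant $C_D({\bf X})$ gives the standard volume lower bound: there is a constant $c>0$ such that $\mm(B_r(x))\geq c\,(r/{\rm diam}(\X))^{\log_2 C_D({\bf X})}\mm(\X)$ for all $x\in\X$ and $r\leq{\rm diam}(\X)$. Fixing $\alpha\geq\log_2(C_D({\bf X}))$, one has in particular $\mm(B_r(x))\geq c\,(r/{\rm diam}(\X))^{\alpha}\mm(\X)$. Now given a set of finite perimeter $E$, assume without loss of generality $\mm(E)\leq\mm(\X\setminus E)$, so that $\mm(E)\leq\frac12\mm(\X)$. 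Pick the radius $r$ for which the relative isoperimetric inequality on the ball $B_r(x)$, centered so that $\mm(E\cap B_r(x))$ is comparable to $\mm(B_r(x))$ — more precisely choose $r$ as the largest radius with $\mm(E\cap B_r(x))\leq\frac12\mm(B_r(x))$ for a suitable $x$, using a covering/Vitali argument to localize mass of $E$ — and combine $\min\{\mm(E\cap B_r(x)),\mm(B_r(x)\setminus E)\}=\mm(E\cap B_r(x))\gtrsim\mm(B_r(x))\gtrsim (r/{\rm diam}(\X))^\alpha\mm(\X)$ with the relative isoperimetric bound $\mm(E\cap B_r(x))\lesssim r\,{\rm P}(E,B_{\lambda r}(x))\leq r\,{\rm P}(E)$. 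Eliminating $r$ between these two inequalities yields, after rearranging, a bound of the form $\mm(E)^{(\alpha-1)/\alpha}\lesssim{\rm P}(E)$, i.e.\ $\mm(E)\lesssim{\rm P}(E)^{\alpha/(\alpha-1)}$, with all implied constants depending only on $C_D({\bf X})$, $C_P({\bf X})$, $\lambda({\bf X})$, ${\rm diam}(\X)$, $\mm(\X)$ and $\alpha$, as claimed.

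An alternative — and in some respects cleaner — route uses the coarea formula (Theorem \ref{thm:coarea}) together with a self-improvement of the Sobolev inequality. From the relative Poincar\'{e} inequality on the whole bounded space one first establishes the global Sobolev inequality $\|f-\bar f\|_{L^{1}(\mm)}\lesssim{\rm diam}(\X)\int\lip_a(f)\,\d\mm$ for Lipschitz $f$ (where $\bar f$ is the mean), and then upgrades it via the doubling dimension to a higher-integrability Sobolev inequality $\|f-\bar f\|_{L^{\alpha/(\alpha-1)}(\mm)}\lesssim C\int\lip_a(f)\,\d\mm$; this $(1,\alpha^*)$-Sobolev inequality is a standard consequence of the $(1,1)$-Poincar\'{e} inequality on doubling spaces, cf.\ \cite{HKST:15,hajlasz2000sobolev}. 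Applying this to $f=\1_E$ in the BV sense (justified by approximating $\1_E$ in energy by Lipschitz functions, as in Definition \ref{def:BV_rel} and Theorem \ref{thm:BV=BV*_mms}) and computing $\|\1_E-\mm(E)/\mm(\X)\|_{L^{\alpha^*}(\mm)}$ explicitly gives exactly $\min\{\mm(E),\mm(\X\setminus E)\}\lesssim{\rm P}(E)^{\alpha/(\alpha-1)}$ up to a dimensional constant.

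The main obstacle I anticipate is bookkeeping the constants to confirm they depend only on the listed quantities — in particular making sure the passage from the relative (ball-wise) isoperimetric inequality to the global one does not secretly introduce a dependence on the measures of small balls beyond what the doubling constant and $\mm(\X)$, ${\rm diam}(\X)$ already encode. The geometric heart of the argument (relative isoperimetric inequality on balls from the $(1,1)$-Poincar\'{e} inequality, and the volume lower bound from doubling) is entirely classical; the only real work is the elimination-of-$r$ step and the careful tracking of constants through it. I would present the proof via the Sobolev-inequality route, since it most transparently produces the exponent $\frac{\alpha}{\alpha-1}$ and makes the constant dependence manifest, relegating the relative isoperimetric inequality and the doubling volume estimate to citations of \cite{HKST:15}.
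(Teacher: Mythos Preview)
Your proposal is correct in spirit, but you are working much harder than necessary. The paper's proof is a two-line argument: it cites directly the \emph{relative isoperimetric inequality with the sharp exponent} from \cite[Theorem~4.5]{Mir:03}, namely
\[
\min\{\mm(B_r(x)\cap E),\mm(B_r(x)\setminus E)\}\leq\tilde C_I({\bf X},\alpha)
\bigg(\frac{r^\alpha}{\mm(B_r(x))}\bigg)^{\frac{1}{\alpha-1}}{\rm P}(E)^{\frac{\alpha}{\alpha-1}},
\]
with \(\tilde C_I\) depending only on \(C_D,C_P,\lambda,\alpha\), and then simply substitutes \(r={\rm diam}(\X)\) so that \(B_r(x)=\X\). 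This gives \eqref{eq:glob_isoper_ineq} immediately with \(C_I({\bf X},\alpha)=\tilde C_I({\bf X},\alpha)\big({\rm diam}(\X)^\alpha/\mm(\X)\big)^{\frac{1}{\alpha-1}}\).

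The difference from your approach is that Miranda's relative isoperimetric inequality \emph{already carries the exponent} \(\alpha/(\alpha-1)\); there is no ``upgrade'' step to perform. You start instead from the linear-in-\({\rm P}(E)\) Poincar\'e-type inequality and then re-derive the sharp exponent via a Sobolev--Poincar\'e self-improvement (Route~2) or a Maz'ya-type scale selection (Route~1). Route~2 is a perfectly valid alternative proof and has the merit of being more self-contained; it is in fact essentially how \cite[Theorem~4.5]{Mir:03} itself is proved. Route~1 as you sketch it is vaguer---the step ``choose \(r\) as the largest radius with \(\mm(E\cap B_r(x))\leq\tfrac12\mm(B_r(x))\) for a suitable \(x\), using a covering/Vitali argument'' needs more care to actually yield a bound on \(\mm(E)\) rather than on \(\mm(E\cap B_r(x))\)---so if you pursue your own route, Route~2 is the one to write out. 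Either way, the constant bookkeeping you flag as the main obstacle is not an issue: in the paper's argument the dependence on \({\rm diam}(\X)\) and \(\mm(\X)\) is explicit and arises only in the final substitution.
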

\begin{proof}
We know from \cite[Theorem 4.5]{Mir:03} that the following \emph{relative isoperimetric inequality} holds: for some constant
\(\tilde C_I({\bf X},\alpha)>0\), depending only on \(C_D({\bf X})\), \(C_P({\bf X})\), \(\lambda({\bf X})\) and \(\alpha\), we have
\begin{equation}\label{eq:rel_isoper_ineq}
\min\{\mm(B_r(x)\cap E),\mm(B_r(x)\setminus E)\}\leq\tilde C_I({\bf X},\alpha)
\bigg(\frac{r^\alpha}{\mm(B_r(x))}\bigg)^{\frac{1}{\alpha-1}}{\rm P}(E)^{\frac{\alpha}{\alpha-1}}
\end{equation}
whenever \(E\subseteq\X\) is a set of finite perimeter, \(x\in\X\) and \(r>0\). Since \(B_{{\rm diam}(\X)}(x)=\X\),
by choosing \(r\coloneqq{\rm diam}(\X)\) in \eqref{eq:rel_isoper_ineq}, we obtain that \eqref{eq:glob_isoper_ineq}
holds with \(C_I({\bf X},\alpha)\coloneqq\tilde C_I({\bf X},\alpha)\big(\frac{{\rm diam}(\X)^\alpha}{\mm(\X)}\big)^{\frac{1}{\alpha-1}}\).
\end{proof}

For the sake of conciseness, we introduce the shorthand notation
\[
\alpha_*\coloneqq\frac{\alpha}{\alpha-1}\in(1,+\infty)\quad\text{ for every }\alpha\in(1,+\infty).
\]
By adapting the proof of the embedding theorem for Euclidean BV functions into \(L^{n/(n-1)}(\R^n)\)
(see e.g.\ \cite[Theorem 3.47]{AmbFusPal00}), we obtain the following result:
\begin{theorem}[Embedding theorem]\label{thm:embedding_thm}
Let \({\bf X}=(\X,\sfd,\mm)\) be a bounded PI space and \(f\in{\rm BV}({\bf X})\). Fix \(\alpha\in(1,+\infty)\) with
\(\alpha\geq\log_2(C_D({\bf X}))\). Then there exists \(c_f\in\R\) such that \(|c_f|\leq\frac{2\|f\|_{L^1(\mm)}}{\mm(\X)}\) and
\begin{equation}\label{eq:embedding_ineq}
\|f-c_f||_{L^{\alpha_*}(\mm)}\leq 2 C_I({\bf X},\alpha)^{\frac{1}{\alpha_*}}{\rm V}(f).
\end{equation}
\end{theorem}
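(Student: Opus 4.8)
The plan is to mimic the classical proof of the Euclidean Sobolev--Gagliardo--Nirenberg estimate for $\mathrm{BV}$ functions, replacing the sharp isoperimetric inequality on $\R^n$ by the global isoperimetric inequality on bounded PI spaces provided by Theorem~\ref{thm:glob_isoper_ineq}, and the layer-cake identity by the coarea formula of Theorem~\ref{thm:coarea}. First I would reduce to a good representative: since $(\X,\sfd)$ is a bounded PI space it is in particular complete and separable (hence Radon), so both Theorem~\ref{thm:coarea} and Theorem~\ref{thm:glob_isoper_ineq} apply. For $t\in\R$ write $E_t\coloneqq\{f>t\}$; by the coarea formula $E_t$ has finite perimeter for a.e.\ $t$ and $\int_\R{\rm P}(E_t)\,\d t={\rm V}(f)<\infty$.

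The key algebraic step is to choose the constant $c_f$ to be a \emph{median} of $f$, i.e.\ a value $c_f\in\R$ such that $\mm(\{f>c_f\})\leq\mm(\X)/2$ and $\mm(\{f<c_f\})\leq\mm(\X)/2$; such a value exists because $t\mapsto\mm(\{f>t\})$ is nonincreasing and right-continuous with limits $\mm(\X)$ and $0$ at $\mp\infty$. One checks that $|c_f|\leq 2\|f\|_{L^1(\mm)}/\mm(\X)$ by a Chebyshev-type argument: if $c_f>0$ then $\mm(\X)/2\le\mm(\{f\ge c_f\})\le\|f\|_{L^1(\mm)}/c_f$, and symmetrically for $c_f<0$. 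Replacing $f$ by $f-c_f$ (which has the same total variation, as ${\rm V}$ is translation-invariant by constants, and whose positive/negative parts each have their superlevel-set measures bounded by $\mm(\X)/2$), it suffices to prove $\|f\|_{L^{\alpha_*}(\mm)}\le 2C_I({\bf X},\alpha)^{1/\alpha_*}{\rm V}(f)$ whenever $f$ is a median-zero $\mathrm{BV}$ function. Split $f=f^+-f^-$; by the triangle inequality in $L^{\alpha_*}$ and because ${\rm V}(f^+)+{\rm V}(f^-)={\rm V}(f)$ (again via coarea, since the superlevel sets of $f^+$ for $t>0$ are those of $f$, and similarly for $f^-$), it is enough to treat a nonnegative $g\in\mathrm{BV}({\bf X})$ with $\mm(\{g>0\})\le\mm(\X)/2$ and show $\|g\|_{L^{\alpha_*}(\mm)}\le C_I({\bf X},\alpha)^{1/\alpha_*}{\rm V}(g)$.

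For such a $g$, I would write, using the layer-cake formula for $g\ge 0$ and the fact that for $t>0$ we have $\mm(\{g>t\})=\min\{\mm(\{g>t\}),\mm(\X\setminus\{g>t\})\}$ (here the median-zero normalization is used: $\mm(\{g>t\})\le\mm(\{g>0\})\le\mm(\X)/2$), the chain
\begin{align*}
\|g\|_{L^{\alpha_*}(\mm)}
&=\Big(\int_\X g^{\alpha_*}\,\d\mm\Big)^{1/\alpha_*}
=\Big(\alpha_*\int_0^\infty t^{\alpha_*-1}\mm(\{g>t\})\,\d t\Big)^{1/\alpha_*}\\
&\le\Big(\alpha_*\int_0^\infty t^{\alpha_*-1}C_I({\bf X},\alpha){\rm P}(\{g>t\})^{\alpha_*}\,\d t\Big)^{1/\alpha_*},
\end{align*}
where the inequality is Theorem~\ref{thm:glob_isoper_ineq}. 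Setting $\mu(t)\coloneqq{\rm P}(\{g>t\})$ (measurable, with $\int_0^\infty\mu\,\d t\le{\rm V}(g)$), the estimate reduces to the pure one-dimensional inequality
\[
\Big(\alpha_*\int_0^\infty t^{\alpha_*-1}\mu(t)^{\alpha_*}\,\d t\Big)^{1/\alpha_*}\le\Big(\int_0^\infty\mu(t)\,\d t\Big),
\]
valid for every nonnegative measurable $\mu$. I expect this last one-dimensional inequality to be the only genuinely technical point. It can be proved, for instance, by writing $t^{\alpha_*-1}=\big(\int_0^t\,\d s\big)^{\alpha_*-1}$ and applying Minkowski's integral inequality with exponent $\alpha_*>1$ to the function $(s,t)\mapsto\mathbbm 1_{\{s<t\}}\mu(t)$, so that $\big(\alpha_*\int_0^\infty t^{\alpha_*-1}\mu(t)^{\alpha_*}\,\d t\big)^{1/\alpha_*}=\alpha_*^{1/\alpha_*}\big\|\int_0^\infty\mathbbm 1_{\{s<t\}}\mu(t)\,\d t\big\|$-type manipulation collapses, after an integration by parts / Hardy-inequality argument, to $\int_0^\infty\mu(t)\,\d t$; the constant $\alpha_*^{1/\alpha_*}$ is absorbed into the factor $2$ in \eqref{eq:embedding_ineq} together with the factor coming from the $f^+/f^-$ splitting. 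Combining the bound for $g=f^+$ and $g=f^-$ with the triangle inequality then yields \eqref{eq:embedding_ineq}, and tracing back the substitution $f\mapsto f-c_f$ completes the proof.
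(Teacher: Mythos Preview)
Your overall architecture (median constant, Chebyshev bound for $|c_f|$, split into positive/negative parts, layer cake, isoperimetric, coarea) matches the paper's proof. However, there is a genuine gap in the order in which you apply the isoperimetric inequality and the one-dimensional lemma, and the one-dimensional inequality you state is simply false.

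You apply Theorem~\ref{thm:glob_isoper_ineq} \emph{inside} the layer-cake integral and then claim
\[
\Big(\alpha_*\int_0^\infty t^{\alpha_*-1}\mu(t)^{\alpha_*}\,\d t\Big)^{1/\alpha_*}\le\int_0^\infty\mu(t)\,\d t
\]
for every nonnegative measurable $\mu$. This is not true: take $\mu=\1_{[a,a+1]}$ with $a$ large; the right side equals $1$ while the left side equals $\big((a+1)^{\alpha_*}-a^{\alpha_*}\big)^{1/\alpha_*}\to\infty$. The perimeter function $t\mapsto{\rm P}(\{g>t\})$ is \emph{not} monotone in general, so there is no reason to restrict to nonincreasing $\mu$, and your Minkowski/Hardy sketch cannot rescue it (nor can the stray $\alpha_*^{1/\alpha_*}$ be ``absorbed'' as you suggest).

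The fix, which is exactly what the paper does, is to reverse the order: first apply the one-dimensional lemma to the \emph{monotone} distribution function $m(t)\coloneqq\mm(\{g>t\})$, namely
\[
\alpha_*\int_0^\infty t^{\alpha_*-1}m(t)\,\d t\;\le\;\Big(\int_0^\infty m(t)^{1/\alpha_*}\,\d t\Big)^{\alpha_*}
\]
(this is \cite[Lemma~3.48]{AmbFusPal00}, and it \emph{does} require $m$ nonincreasing), and only \emph{then} invoke the isoperimetric inequality pointwise in the form $m(t)^{1/\alpha_*}\le C_I({\bf X},\alpha)^{1/\alpha_*}{\rm P}(\{g>t\})$, after which coarea gives $\int_0^\infty{\rm P}(\{g>t\})\,\d t={\rm V}(g)$. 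With this order the constants come out exactly as in \eqref{eq:embedding_ineq}, the factor $2$ arising from the triangle inequality on $f_+$ and $f_-$ together with ${\rm V}(f_\pm)\le{\rm V}(f)$.
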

\begin{proof}
Since the function \(\R\ni t\mapsto\mm(\{f>t\})\) is non-increasing, we can find \(c_f\in\R\) such that
\begin{align}
\label{eq:c_f_prop_1}
\mm(\{f\leq t\})\leq\mm(\{f>t\})&\quad\text{ for every }t\in(-\infty,c_f),\\
\label{eq:c_f_prop_2}
\mm(\{f>t\})\leq\mm(\{f\leq t\})&\quad\text{ for every }t\in[c_f,+\infty).
\end{align}
Let us check that \(|c_f|\leq\frac{2\|f\|_{L^1(\mm)}}{\mm(\X)}\). If \(c_f>0\), then \eqref{eq:c_f_prop_1} and Chebyshev's inequality yield
\[
\mm(\X)\leq 2\,\mm(\{f>c_f-\varepsilon\})\leq\frac{2}{c_f-\varepsilon}\int_{\{f\geq c_f-\varepsilon\}}f\,\d\mm\leq
\frac{2}{c_f-\varepsilon}\int|f|\,\d\mm\quad\text{ for every }\varepsilon\in(0,c_f),
\]
whence it follows that \(c_f\leq\frac{2\|f\|_{L^1(\mm)}}{\mm(\X)}\). If \(c_f<0\), then \eqref{eq:c_f_prop_2} and Chebyshev's inequality yield
\[
\mm(\X)\leq 2\,\mm(\{f\leq c_f\})=2\,\mm(\{-f\geq -c_f\})\leq\frac{2}{-c_f}\int_{\{-f\leq -c_f\}}-f\,\d\mm
\leq\frac{2}{-c_f}\int|f|\,\d\mm,
\]
so that \(-c_f\leq\frac{2\|f\|_{L^1(\mm)}}{\mm(\X)}\). All in all, we have shown that \(|c_f|\leq\frac{2\|f\|_{L^1(\mm)}}{\mm(\X)}\).
Let us now pass to the verification of \eqref{eq:embedding_ineq}. Denote \(f_+\coloneqq(f-c_f)^+\) and \(f_-\coloneqq(f-c_f)^-\).
Then we can estimate
\[\begin{split}
\int f_\pm^{\alpha_*}\,\d\mm&\overset{{\rm a)}}{=}\int_0^{+\infty}\mm(\{f_\pm^{\alpha_*}>r\})\,\d r
\overset{{\rm b)}}{=}\alpha_*\int_0^{+\infty}\mm(\{f_\pm>t\})t^{\alpha_*-1}\,\d t\\
&\overset{{\rm c)}}{\leq}\bigg(\int_0^{+\infty}\mm(\{f_\pm>t\})^{\frac{1}{\alpha_*}}\,\d t\bigg)^{\alpha_*}
\overset{{\rm d)}}{\leq}C_I({\bf X},\alpha)\bigg(\int_0^{+\infty}{\rm P}(\{f_\pm>t\})\,\d t\bigg)^{\alpha_*}\\
&\overset{{\rm e)}}{=}C_I({\bf X},\alpha){\rm V}(f_\pm)^{\alpha_*}
\overset{{\rm f)}}{\leq}C_I({\bf X},\alpha){\rm V}(f)^{\alpha_*},
\end{split}\]
where: for \({\rm a)}\), we use the layer cake representation formula; for \({\rm b)}\), we apply the change of variables
\(t=r^{\frac{1}{\alpha_*}}\); for \({\rm c)}\), we apply \cite[Lemma 3.48]{AmbFusPal00}; for \({\rm d)}\), we use the
global isoperimetric inequality (Theorem \ref{thm:glob_isoper_ineq}) and the fact that \(\mm(\{f_\pm>t\})\leq\mm(\{f_\pm\leq t\})\)
for a.e.\ \(t>0\) (by \eqref{eq:c_f_prop_1} and \eqref{eq:c_f_prop_2}); for \({\rm e)}\), we apply \eqref{eq:chain_rule_BV}
and the coarea formula (Theorem \ref{thm:coarea}); for \({\rm f)}\), we use \eqref{eq:const_BV} and \eqref{eq:chain_rule_BV}
(together with Corollary \ref{cor:predual_BV_mms}). Given that \(f-c_f=f_+ - f_-\), we conclude that
\[
\|f-c_f\|_{L^{\alpha_*}(\mm)}\leq\|f_+\|_{L^{\alpha_*}(\mm)}+\|f_-\|_{L^{\alpha_*}(\mm)}\leq 2C_I({\bf X},\alpha)^{\frac{1}{\alpha_*}}{\rm V}(f),
\]
thus proving \eqref{eq:embedding_ineq}.
\end{proof}

As a consequence of Theorem \ref{thm:embedding_thm}, we obtain that \({\rm BV}({\bf X})\approx{\rm BV}_{\alpha_*}({\bf X})\)
for all large \(\alpha>1\):
\begin{corollary}\label{cor:PI_BV=BV_alpha_star}
Let \({\bf X}=(\X,\sfd,\mm)\) be a bounded PI space. Fix \(\alpha\in(1,+\infty)\) with \(\alpha\geq\log_2(C_D({\bf X}))\).
Then we have \({\rm BV}({\bf X})={\rm BV}_{\alpha_*}({\bf X})\). Moreover, for any \(f\in{\rm BV}({\bf X})\) we have
\begin{equation}\label{eq:equiv_BV_BV_alpha*}
\frac{1}{\max\{\mm(\X)^{\frac{1}{\alpha}},1\}}\|f\|_{{\rm BV}({\bf X})}\leq\|f\|_{{\rm BV}_{\alpha_*}({\bf X})}
\leq\max\{2C_I({\bf X},\alpha)^{\frac{1}{\alpha_*}}+1,2\mm(\X)^{-\frac{1}{\alpha}}\}\|f\|_{{\rm BV}({\bf X})}.
\end{equation}
\end{corollary}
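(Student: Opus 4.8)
The plan is to read off both assertions directly from the embedding theorem (Theorem~\ref{thm:embedding_thm}), using H\"older's inequality and the finiteness of \(\mm\) to convert between the \(L^1\)- and \(L^{\alpha_*}\)-norms. I would first dispose of the set-theoretic identity \({\rm BV}({\bf X})={\rm BV}_{\alpha_*}({\bf X})\): the inclusion \({\rm BV}_{\alpha_*}({\bf X})\subseteq{\rm BV}({\bf X})\) is immediate from Definition~\ref{def:BV_der_p}, while for the converse I take \(f\in{\rm BV}({\bf X})\), let \(c_f\in\R\) be the constant provided by Theorem~\ref{thm:embedding_thm}, and note that \(f=(f-c_f)+c_f\) with \(f-c_f\in L^{\alpha_*}(\mm)\) by \eqref{eq:embedding_ineq} and \(c_f\in L^{\alpha_*}(\mm)\) because \(\mm(\X)<+\infty\); hence \(f\in L^{\alpha_*}(\mm)\cap{\rm BV}({\bf X})={\rm BV}_{\alpha_*}({\bf X})\).

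For the left-hand inequality in \eqref{eq:equiv_BV_BV_alpha*}, the key point is the identity \(1-\frac 1{\alpha_*}=\frac 1\alpha\), so that H\"older's inequality on the finite measure space \((\X,\mm)\) gives \(\|f\|_{L^1(\mm)}\le\mm(\X)^{1/\alpha}\|f\|_{L^{\alpha_*}(\mm)}\). Adding \({\rm V}(f)\) to both sides and bounding the coefficient of each term by \(\max\{\mm(\X)^{1/\alpha},1\}\) yields \(\|f\|_{{\rm BV}({\bf X})}\le\max\{\mm(\X)^{1/\alpha},1\}\,\|f\|_{{\rm BV}_{\alpha_*}({\bf X})}\), which is exactly the claimed bound after rearranging.

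For the right-hand inequality, I estimate \(\|f\|_{L^{\alpha_*}(\mm)}\le\|f-c_f\|_{L^{\alpha_*}(\mm)}+|c_f|\,\mm(\X)^{1/\alpha_*}\), then bound the first term by \(2C_I({\bf X},\alpha)^{1/\alpha_*}{\rm V}(f)\) via \eqref{eq:embedding_ineq}, and the second term by \(2\mm(\X)^{-1/\alpha}\|f\|_{L^1(\mm)}\) using \(|c_f|\le 2\|f\|_{L^1(\mm)}/\mm(\X)\) together with \(\frac 1{\alpha_*}-1=-\frac 1\alpha\). Adding \({\rm V}(f)\) and collecting the coefficients of \({\rm V}(f)\) and of \(\|f\|_{L^1(\mm)}\) gives \(\|f\|_{{\rm BV}_{\alpha_*}({\bf X})}\le(2C_I({\bf X},\alpha)^{1/\alpha_*}+1)\,{\rm V}(f)+2\mm(\X)^{-1/\alpha}\|f\|_{L^1(\mm)}\), and bounding each coefficient by their maximum produces the second inequality in \eqref{eq:equiv_BV_BV_alpha*}.

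I do not expect a genuine obstacle here, since all the analytic substance is already contained in Theorem~\ref{thm:embedding_thm}; the only thing requiring a little care is the bookkeeping of exponents (checking \(1-\frac 1{\alpha_*}=\frac 1\alpha\) and \(\frac 1{\alpha_*}-1=-\frac 1\alpha\)) and making sure the resulting constants match verbatim the ones displayed in \eqref{eq:equiv_BV_BV_alpha*}.
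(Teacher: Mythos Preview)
Your proposal is correct and follows essentially the same route as the paper's proof: H\"older's inequality for the left-hand bound, and the triangle inequality combined with the embedding theorem (Theorem~\ref{thm:embedding_thm}) and the estimate \(|c_f|\le 2\|f\|_{L^1(\mm)}/\mm(\X)\) for the right-hand bound. The only difference is that you spell out the set-theoretic equality \({\rm BV}({\bf X})={\rm BV}_{\alpha_*}({\bf X})\) separately, whereas the paper leaves it implicit in the norm equivalence.
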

\begin{proof}
To obtain the first inequality in \eqref{eq:equiv_BV_BV_alpha*}, observe that
\(\|f\|_{L^1(\mm)}\leq\mm(\X)^{\frac{1}{\alpha}}\|f\|_{L^{\alpha_*}(\mm)}\) by H\"{o}lder's inequality.
To obtain the second inequality in \eqref{eq:equiv_BV_BV_alpha*}, observe that
\[
\|f\|_{L^{\alpha_*}(\mm)}\leq\|f-c_f\|_{L^{\alpha_*}(\mm)}+|c_f|\mm(\X)^{\frac{1}{\alpha_*}}
\leq 2 C_I({\bf X},\alpha)^{\frac{1}{\alpha_*}}{\rm V}(f)+2\mm(\X)^{-\frac{1}{\alpha}}\|f\|_{L^1(\mm)}
\]
thanks to Theorem \ref{thm:embedding_thm}.
\end{proof}

Finally, we are now in a position to prove the last result of this paper:
\begin{theorem}[Predual of \({\rm BV}({\bf X})\) when \({\bf X}\) is a bounded PI space]\label{thm:predual_BV_PI}
Let \({\bf X}=(\X,\sfd,\mm)\) be a bounded PI space. Then \({\rm BV}({\bf X})\)
has predual \(\mathbb W_\alpha({\bf X})/\overline{\mathbb V}_\alpha({\bf X})\) for every \(\alpha\in(1,+\infty)\)
with \(\alpha\geq\log_2(C_D({\bf X}))\).
\end{theorem}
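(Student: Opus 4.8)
The plan is to deduce the statement by combining the predual construction of Theorem \ref{thm:predual_BV_p} with the higher integrability of BV functions on PI spaces provided by Corollary \ref{cor:PI_BV=BV_alpha_star}. The key observation is that, although Theorem \ref{thm:predual_BV_p} requires local compactness, this is automatic on a bounded PI space, which is why the predual in the statement is expressed in terms of \({\bf X}\) itself (rather than of its Gelfand compactification \(\hat{\bf X}\), as in Corollary \ref{cor:predual_BV_mms}).

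First I would record that a bounded PI space is a complete m.m.s.\ whose measure \(\mm\) is doubling; hence \((\X,\sfd)\) is metrically doubling, and a complete metrically doubling metric space is proper (closed bounded sets are compact) — in particular, locally compact (see e.g.\ \cite{HKST:15}). We may therefore regard \({\bf X}\) as a locally-compact e.m.t.m.s.\ with \(\tau\) the topology induced by \(\sfd\). Now fix \(\alpha\in(1,+\infty)\) with \(\alpha\geq\log_2(C_D({\bf X}))\), and note that \(p\coloneqq\alpha_*=\frac{\alpha}{\alpha-1}\) and \(q\coloneqq\alpha\) are conjugate exponents both lying in \((1,\infty)\). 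Applying Theorem \ref{thm:predual_BV_p} then yields the isometric isomorphism
\[
{\rm BV}_{\alpha_*}({\bf X})\approx_1\big(\mathbb W_\alpha({\bf X})/\overline{\mathbb V}_\alpha({\bf X})\big)^*.
\]

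Next I would invoke Corollary \ref{cor:PI_BV=BV_alpha_star}: since \({\bf X}\) is a bounded PI space and \(\alpha\geq\log_2(C_D({\bf X}))\), we have \({\rm BV}({\bf X})={\rm BV}_{\alpha_*}({\bf X})\) with equivalent norms, so that the identity map is a linear homeomorphism, i.e.\ \({\rm BV}({\bf X})\approx{\rm BV}_{\alpha_*}({\bf X})\) as Banach spaces (by the two-sided bound \eqref{eq:equiv_BV_BV_alpha*}). Composing this with the previous display gives \({\rm BV}({\bf X})\approx\big(\mathbb W_\alpha({\bf X})/\overline{\mathbb V}_\alpha({\bf X})\big)^*\), which is exactly the assertion that \(\mathbb W_\alpha({\bf X})/\overline{\mathbb V}_\alpha({\bf X})\) is a predual of \({\rm BV}({\bf X})\).

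The only step that is more than a formal bookkeeping of earlier results is the verification of local compactness, needed to apply Theorem \ref{thm:predual_BV_p}; this is the (mild) point I would flag as the main obstacle, though it is entirely standard. I would also note, for honesty, that because the identification in Corollary \ref{cor:PI_BV=BV_alpha_star} is merely an isomorphism and not an isometry, this argument produces a predual but in general not an \emph{isometric} one — which is consistent with the statement of the theorem.
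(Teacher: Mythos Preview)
Your proof is correct and follows exactly the paper's approach: apply Theorem \ref{thm:predual_BV_p} with the conjugate pair \((p,q)=(\alpha_*,\alpha)\) and then transfer the predual to \({\rm BV}({\bf X})\) via the isomorphism \({\rm BV}({\bf X})\approx{\rm BV}_{\alpha_*}({\bf X})\) from Corollary \ref{cor:PI_BV=BV_alpha_star}. Your explicit verification that a bounded PI space is proper (hence locally compact), which is needed to invoke Theorem \ref{thm:predual_BV_p} directly on \({\bf X}\) rather than on \(\hat{\bf X}\), is a point the paper leaves implicit, and your remark that the resulting predual is not claimed to be isometric is accurate.
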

\begin{proof}
Since \(\alpha_*\) and \(\alpha\) are conjugate exponents, we know that
\({\rm BV}_{\alpha_*}({\bf X})\approx_1(\mathbb W_\alpha({\bf X})/\overline{\mathbb V}_\alpha({\bf X}))^*\)
by Theorem \ref{thm:predual_BV_p}. Given that \({\rm BV}({\bf X})\approx{\rm BV}_{\alpha_*}({\bf X})\) by
Corollary \ref{cor:PI_BV=BV_alpha_star}, the statement follows.
\end{proof}
\end{document}